\newcommand{\cst}{\ifmmode\mathrm{C}^*\else{$\mathrm{C}^*$}\fi}
\newcommand{\st}{\;\vline\;}
\newcommand{\CC}{\mathbb{C}}
\newcommand{\RR}{\mathbb{R}}
\newcommand{\NN}{\mathbb{N}}
\newcommand{\TT}{\mathbb{T}}
\newcommand{\KK}{\mathbb{K}}
\newcommand{\QG}{\mathbb{G}}
\newcommand{\tens}{\otimes}
\newcommand{\vtens}{\,\bar{\otimes}\,}
\newcommand{\id}{\mathrm{id}}
\newcommand{\comp}{\circ}
\newcommand{\uu}{{\scriptscriptstyle\mathrm{u}}}
\newcommand{\I}{\mathds{1}}
\newcommand{\cA}{\mathscr{A}}
\newcommand{\QH}{\mathbb{H}}
\newcommand{\sA}{\mathsf{A}}
\newcommand{\sB}{\mathsf{B}}
\newcommand{\sM}{\mathsf{M}}
\newcommand{\sZ}{\mathsf{Z}}
\newcommand{\sN}{\mathsf{N}}
\newcommand{\sX}{\mathsf{X}}
\newcommand{\sY}{\mathsf{Y}}
\newcommand{\sL}{\mathsf{L}}
\newcommand{\hh}[1]{\widehat{#1}}
\newcommand{\dd}[1]{\widetilde{#1}}
\newcommand{\op}{\text{\rm\tiny{op}}}
\newcommand{\flip}{\boldsymbol{\sigma}}
\newcommand{\ww}{\mathrm{W}}
\newcommand{\WW}{{\mathds{V}\!\!\text{\reflectbox{$\mathds{V}$}}}}
\newcommand{\Ww}{\mathds{W}}
\newcommand{\wW}{\text{\reflectbox{$\Ww$}}\:\!} 
\newcommand{\alR}{\boldsymbol{\alpha}_{\text{\tiny\rm{R}}}}
\newcommand{\alL}{\boldsymbol{\alpha}_{\text{\tiny\rm{L}}}}
\newcommand{\LL}{\mathbb{L}}
\newcommand{\tp}{\;\!\xymatrix{*+<.1ex>[o][F-]{\raisebox{-1.4ex}{$\scriptstyle\top$}}}\;\!}
\DeclareMathOperator{\Pol}{\mathcal{O}}
\DeclareMathOperator{\C}{C}
\DeclareMathOperator{\linf}{\ell^\infty\;\!\!}
\DeclareMathOperator{\cZ}{\mathscr{Z}}
\DeclareMathOperator{\B}{B}
\DeclareMathOperator{\vN}{vN}
\DeclareMathOperator{\Mor}{Mor}
\DeclareMathOperator{\M}{M}
\DeclareMathOperator{\Inn}{Inn}
\DeclareMathOperator{\linW}{\overline{span}^{\:\!\text{\tiny\rm{w}}}\:\!\!}
\DeclareMathOperator{\Linf}{\mathnormal{L}^\infty\;\!\!}
\DeclareMathOperator{\Ltwo}{\mathnormal{L}^2\;\!\!}
\DeclareMathOperator{\Irr}{Irr}
\DeclareMathOperator{\c0}{c_0}
\newtheorem{proposition}{Proposition}[section]
  \newtheorem{theorem}[proposition]{Theorem}
  \newtheorem{corollary}[proposition]{Corollary}
  \newtheorem{lemma}[proposition]{Lemma}
\theoremstyle{definition}
  \newtheorem{definition}[proposition]{Definition}
  \newtheorem{remark}[proposition]{Remark}
  \newtheorem*{q}{Question}
\numberwithin{equation}{section}
\begin{document}




\keywords{Locally compact quantum group, center, inner automorphisms, coideal, von Neumann algebra}
\subjclass[2010]{Primary: 46L89 Secondary: 46L65, 17B37}


\title{The canonical central exact sequence for locally compact quantum groups}

\author{Pawe{\l} Kasprzak}
\address{Department of Mathematical Methods in Physics, Faculty of Physics, University of Warsaw}

\author{Adam Skalski}
\address{Institute of Mathematics of the Polish Academy of Sciences}

\author{Piotr Miko{\l}aj So{\l}tan}

\begin{abstract}
For a locally compact quantum group $\mathbb{G}$ we define its center, $\mathscr{Z}(\mathbb{G})$, and its quantum group of inner automorphisms, $\mathrm{Inn}(\mathbb{G})$. We show that one obtains a natural isomorphism between $\mathrm{Inn}(\mathbb{G})$ and $\mathbb{G}/\!\mathscr{Z}(\mathbb{G})$, we characterize normal quantum subgroups of a compact quantum group as those left invariant by the action of the quantum group of inner automorphisms and discuss several examples.
\end{abstract}

\maketitle

\section{Introduction}\label{intro}

If $G$ is a classical group, it is elementary that its center, $\cZ(G)$, and its group of inner automorphisms, $\Inn(G)$ fit into a short exact sequence of groups
\[
\xymatrix{\{e\}\ar[r]&\mathscr{Z}(G)\ar[r]&G\ar[r]&\Inn(G)\ar[r]&\{e\}},
\]
where $\{e\}$ denotes the one element group. If $G$ is in addition equipped with a topology making it a locally compact group, $\cZ(G)$ is its closed (normal) subgroup, and $\Inn(G)$ admits a natural locally compact topology (of uniform convergence on compact subsets of $G$) and the sequence above becomes a short exact sequence in the category of locally compact groups.

Recent twenty years brought a large increase of interest in studying extensions of various group-theoretic questions to the context of compact quantum groups of Woronowicz \cite{pseudogr} and later to the category of locally compact quantum groups in the sense of Kustermans and Vaes \cite{KV}. Following a general approach of non-commutative mathematics, this requires reinterpreting various classical notions and properties in terms of appropriate function algebras. As a rule, locally compact quantum group extensions become usually significantly more intricate than their compact/discrete counterparts. Thus, for example, the notion of a closed quantum subgroup of a compact quantum group is relatively straightforward, and dates back already to \cite{podles}, and that of a locally compact quantum subgroup appeared in literature only in \cite{VaesVainerman} and was later investigated in detail in \cite{DKSS}. One significant difference lies in the fact that compact quantum groups are usually studied via their \cst-algebras (or even Hopf $*$-algebras), whereas the general locally compact situation is often best understood via von Neumann algebraic objects -- this will also be the approach taken in this paper.

Compact quantum groups are amenable to various universal constructions. Thus it should not be a surprise that a notion of the center of a compact quantum group $\QG$ introduced in \cite{Patri} (see also \cite{Chirva} and earlier \cite{wangsimple}), inspired by the work of M\"uger \cite{Mug} in the classical case, is based on one hand on the universal property ($\cZ(\QG)$ is the largest \emph{central} closed subgroup of $\QG$) and on the other on the categorical approach to compact quantum groups via their representation theory. One should note that central subgroups, so in particular $\cZ(\QG)$, are automatically \emph{cocommutative}, or in other words \emph{abelian} (this terminology should not be confused with the statement that say $\C(\QG)$ is abelian -- we then say that $\QG$ is \emph{classical}). The description of $\cZ(\QG)$, although in a sense non-explicit, allowed the authors of \cite{Patri} and \cite{Chirva} to determine the center in several concrete examples. In particular one can easily see that $\cZ(\QG)$ is a \emph{normal} subgroup of $\QG$ in the sense of \cite{wang3}, and thus one can also consider the quotient compact quantum group $\QG/\!\cZ(\QG)$, which in the case of $\QG=G$ being classical would be isomorphic to the group $\Inn(G)$.

The starting point for our considerations in this paper is an observation that a beautiful theorem of Baaj and Vaes from \cite{BV}, characterizing closed quantum subgroups of a locally compact quantum group $\QG$ via what we call \emph{Baaj-Vaes subalgebras} of $\Linf(\hh{\QG})$ ($\hh{\QG}$ denotes the locally compact quantum group dual to $\QG$), provides a very useful replacement for the universal \cst-algebra constructions familiar from the compact setting, and in particular permits a natural definition of $\cZ(\QG)$, again as a certain abstract maximal object. The center $\cZ(\QG)$ is a normal (in the sense of \cite{VaesVainerman2} or \cite{ext}) subgroup of $\QG$, and so we can speak about the quotient locally compact quantum subgroup $\QG/\!\cZ(\QG)$. We show that one can construct an explicit von Neumann algebra $\sM$ inside $\Linf(\QG)$ which is a Baaj-Vaes subalgebra, and so carries a natural structure of an algebra of essentially bounded functions on a locally compact quantum group, so that in the classical case the resulting locally compact group is precisely the group of inner automorphisms of the initial group. By analogy we denote $\sM$ by $\Linf\bigl(\Inn(\QG)\bigr)$ and call the locally compact quantum group $\Inn(\QG)$ the \emph{quantum group of inner automorphisms of} $\QG$. Thus we obtain the following exact sequence of locally compact quantum groups:
\[
\xymatrix{\{e\}\ar[r]&\mathscr{Z}(\QG)\ar[r]&\QG\ar[r]&\Inn(\QG)\ar[r]&\{e\}}.
\]

We show that in general $\Inn(\QG)$ has a natural action on $\hh{\QG}$, replacing in a sense the natural action of $\Inn(G)$ on $G$ by inner automorphisms, and that invariance under this action can be used to characterize normality of a subgroup of a quantum group as defined by Wang in \cite{free}. We present several examples of the studied constructions, in particular discussing the case of cocycle twists and duals of Drinfeld-Jimbo deformations.

Let us mention here that on the Hopf algebraic level, the Hopf center $\mathcal{ZH}$ of a given Hopf algebra $\mathcal{H}$ was defined in \cite{AD1} as the largest Hopf subalgebra of $\mathcal{H}$ contained in its center. The dual notion, that of the Hopf cocenter $\mathcal{CH}$, was also introduced therein. In \cite{CHK} the corresponding pair of exact sequences of Hopf algebras was shown to exist  and the issue of faithful (co)flatness was addressed.

The detailed plan of the paper is as follows: in the next subsection we recall some of the basic facts and terminology concerning locally compact quantum groups. Section \ref{centre} introduces the definition of the center of a locally compact quantum group $\QG$ and its basic properties, and Section \ref{Inner} does the same for the quantum group of inner automorphisms of $\QG$; these two sections explain also the short exact sequence from the title of the paper. Further in Section \ref{InnAct} we investigate the action of $\Inn(\QG)$ on the dual quantum group $\hh{\QG}$ and relate it to the notion of normality for closed quantum subgroups and in a short Section \ref{examples} we discuss the cases of cocycle twists and duals of Drinfeld-Jimbo deformations.

\subsection{Background and terminology}

By a morphism between von Neumann algebras we will mean a unital, normal $*$-homomorphism, and by a morphism between \cst-algebras $\sA$ and $\sB$ a non-degenerate $*$-homomorphism from $\sA$ to $\M(\sB)$. The symbol $\vtens$ denotes the von Neumann algebraic tensor product, $\otimes$ the minimal/spatial tensor product of $\cst$-algebras. The symbol $\flip$ will always denote the tensor flip map.

Throughout the article symbols $\QG$ and $\QH$ will denote \emph{locally compact quantum groups in the sense of Kustermans and Vaes} \cite{KV}. Here $\QG$ is a virtual object, primarily studied studied via its associated von Neumann algebra of `bounded measurable functions', $\Linf(\QG)$, but also via the \cst-algebra $\C_0(\QG)$ or its universal counterpart $\C_0^\uu(\QG)$ (the canonical \emph{reducing quotient map} from $\C_0^\uu(\QG)$ onto $\C_0(\QG)$ will be denoted $\Lambda_{\QG}$). In the special case of $\QG$ being \emph{discrete} (see \cite[Section 3]{PodSLW}), we will write $\linf(\QG)$ instead of $\Linf(\QG)$.

The dual locally compact quantum group of $\QG$ will be denoted by $\hh{\QG}$; recall that both $\Linf(\QG)$ and $\Linf(\hh{\QG})$ are represented on the same Hilbert space $\Ltwo(\QG)$. We will generally follow the terminology and conventions of the articles \cite{DKSS,KaspSol,proj,ext}; in particular the multiplicative unitary $\ww^{\QG}$ is an element of $\Linf(\hh{\QG})\vtens\Linf(\QG)$ and the \emph{comultiplication} of $\Linf(\QG)$ is given by the formula $\Delta_\QG(x)=\ww^{\QG}(x\tens\I){\ww^{\QG}}^*$, $x\in\Linf(\QG)$. We will very often use the fact that the \emph{left slices} of $\ww^{\QG}$ by normal functionals generate $\Linf(\QG)$ and the corresponding \emph{right slices} of $\ww^{\QG}$ generate $\Linf(\hh{\QG})$. We say that $\QG$ is \emph{classical} if $\Linf(\QG)$ is commutative, as then indeed $\Linf(\QG)=\Linf(G)$ for a uniquely determined locally compact group $G$, and that $\QG$ is \emph{abelian} (also called \emph{cocommutative} by some authors) if the coproduct $\Delta_{\QG}$ is symmetric: $\Delta_{\QG}=\flip\comp\Delta_{\QG}$. In the latter case $\Linf(\QG)$ is the group von Neumann algebra $\vN(G)$ for some locally compact group $G$. Let us also mention here that $\QG^{\op}$ denotes the \emph{opposite locally compact quantum group} of $\QG$, which arises from the same von Neumann algebra $\Linf(\QG)$ equipped with the flipped coproduct $\Delta_{\QG^{\op}}=\flip\comp\Delta_{\QG}$. Symbols $R$, $J$, $\tau$ (often adorned with the superscript $\QG$) will denote respectively the \emph{unitary antipode}, \emph{modular conjugation} and the \emph{scaling group} of $\QG$. We will also use the leg numbering notation, popular in the quantum group literature, without further comment.

Let $\QG$, $\QH$ be locally compact quantum groups. A \emph{bicharacter from $\QG$} to $\QH$ is a unitary $V\in\Linf(\hh{\QH})\vtens\Linf(\QG)$ such that $(\Delta_{\hh{\QH}}\tens\id)V=V_{23}V_{13}$ and $(\id\tens\Delta_{\QG})V=V_{12}V_{13}$; these turn out to be in one-to-one correspondence with \emph{Hopf $*$-algebra morphisms} in $\Mor\bigl(\C_0^\uu(\QH),\C_0^\uu(\QG)\bigr)$, i.e.~elements of $\Mor\bigl(\C_0^\uu(\QH),\C_0^\uu(\QG)\bigr)$ intertwining the respective coproducts, and should be thought of as morphisms from $\QG$ to $\QH$. Bicharacters also lift to the universal level; in particular the multiplicative unitary of $\QG$, representing the identity map on $\QG$, has its universal version $\WW^{\QG}\in\M\bigl(\C_0^\uu(\hh{\QG})\tens\C_0^\uu(\QG)\bigr)$ and obvious semi-universal versions $\wW^\QG$ and $\Ww^\QG$ (cf.~\cite{DKSS}). Note finally that if a bicharacter $V$ describes a morphism from $\QG$ to $\QH$, then $\flip(V)^*$ represents its \emph{dual morphism}, from $\hh{\QH}$ to $\hh{\QG}$. All these facts can be found in \cite{MRW} (see also \cite{DKSS}). We say that $\QH$ \emph{is a closed quantum subgroup of $\QG$ in the sense of Vaes} if there exists an injective von Neumann algebra morphism $\gamma\colon\Linf(\hh{\QH})\to\Linf(\hh{\QG})$ intertwining the respective coproducts (one can then show the existence of a morphism from $\QG$ to $\QH$, whose dual is naturally related to the map $\gamma$; moreover the corresponding Hopf $*$-algebra morphism is a surjection from $\C_0^\uu(\QG)$ onto $\C_0^\uu(\QH)$). As in this article we will not need any other notion of a closed quantum subgroup (see \cite{DKSS} for an extended discussion), instead of the longer phrase `a closed quantum subgroup of $\QG$ in the sense of Vaes' we will simply say `a closed quantum subgroup' or even simply `a closed subgroup'.

If $\sN$ is a von Neumann subalgebra of $\Linf(\QG)$ (by which we always understand in particular that $\I_N = \I_{\Linf(\QG)}$), then we say that $\sN$ is
\begin{enumerate}
\item \emph{central} if $\sN$ is contained in the center of $\Linf(\QG)$,
\item a \emph{left coideal} if $\Delta_\QG(\sN)\subset\Linf(\QG)\vtens\sN$,
\item a \emph{right coideal} if $\Delta_\QG(\sN)\subset\sN\vtens\Linf(\QG)$,
\item an \emph{invariant subalgebra} if $\Delta_\QG(\sN)\subset\sN\vtens\sN$,
\item a \emph{Baaj-Vaes} subalgebra if $\sN$ is an invariant subalgebra, $R(\sN)=\sN$ and for any $t\in\RR$ we have $\tau_t(\sN)=\sN$.
\end{enumerate}

It follows from \cite[Proposition 10.5]{BV}, which we will refer to as the \emph{Baaj-Vaes theorem}, that there is a bijective correspondence between subgroups of $\QG$ closed in the sense of Vaes and Baaj-Vaes subalgebras of $\Linf(\hh{\QG})$.

Let us also recall the notion of a \emph{normal coideal} from \cite{ext}. A left coideal $\sL\subset\Linf(\QG)$ is \emph{normal} if
\[
{\ww^{\hh{\QG}}}^*(\sL\tens\I)\ww^{\hh{\QG}}\subset\sL\vtens\Linf(\hh{\QG})
\]
or, equivalently,
\[
\ww^{\QG}(\I\tens\sL){\ww^{\QG}}^*\subset\Linf(\hh{\QG})\vtens\sL.
\]
Note that in the case when $\QG$ is a compact quantum group the notion of normality of a coideal in $\Linf(\QG)$ had been introduced earlier under the names of \emph{coaction symmetry} \cite{tomatsu} or simply \emph{symmetry} \cite{Salmi}.

Finally we will be using the notion of a \emph{co-dual coideal} studied in \cite{KaspSol} (with the idea dating back to \cite{Enock}). Given a left coideal $\sN\subset\Linf(\QG)$ its co-dual object $\dd{\sN}$ is defined as the relative commutant $\sN'\cap\Linf(\hh{\QG})$. It can be shown that $\dd{\sN}$ is a left coideal in $\Linf(\hh{\QG})$ and that $\dd{\dd{\sN}}=\sN$.

\section{The center of a locally compact quantum group}\label{centre}

In this section we introduce a notion of the center $\cZ(\QG)$ of a locally compact quantum group $\QG$ (in particular, $\cZ(\QG)$ will be a closed quantum subgroup of $\QG$) and discuss its first properties. Given a locally compact quantum group $\QG$ we will use the symbol $\hh{\sZ}$ to denote the center of $\Linf(\hh{\QG})$.

\begin{proposition}
Let $\QG$ be a locally compact quantum group. Then the set of central Baaj-Vaes subalgebras of $\Linf(\hh{\QG})$ has a largest element.
\end{proposition}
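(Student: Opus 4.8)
The plan is to produce the largest element directly as the von Neumann algebra generated by \emph{all} central Baaj--Vaes subalgebras, and then to check that this generated algebra is again central and Baaj--Vaes. Concretely, let $\{\sN_i\}_{i\in I}$ be the family of all central Baaj--Vaes subalgebras of $\Linf(\hh{\QG})$. This family is nonempty, since $\CC\I$ plainly belongs to it, and I set $\sM:=\bigl(\bigcup_{i\in I}\sN_i\bigr)''$, the von Neumann algebra generated by their union. By construction $\sM$ contains every member of the family, so once $\sM$ is shown to be itself a central Baaj--Vaes subalgebra it will automatically be the desired largest one. The argument then reduces to verifying the four defining properties for $\sM$.

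First I would check centrality, which is immediate: each $\sN_i$ is contained in $\hh{\sZ}$, the center of $\Linf(\hh{\QG})$, which is itself a von Neumann algebra; hence $\bigcup_i\sN_i\subset\hh{\sZ}$ forces $\sM\subset\hh{\sZ}$, so $\sM$ is central (and in particular commutative). Next come the two symmetry conditions. Since $R^{\hh{\QG}}$ is a normal $*$-anti-automorphism of $\Linf(\hh{\QG})$ and each $\tau^{\hh{\QG}}_t$ is a normal $*$-automorphism, and since each of these maps carries a generating set to a generating set, one has
\[
R^{\hh{\QG}}(\sM)=\bigl(R^{\hh{\QG}}(\textstyle\bigcup_i\sN_i)\bigr)''=\bigl(\textstyle\bigcup_i R^{\hh{\QG}}(\sN_i)\bigr)''=\bigl(\textstyle\bigcup_i\sN_i\bigr)''=\sM,
\]
and likewise $\tau^{\hh{\QG}}_t(\sM)=\sM$ for all $t\in\RR$; here I use only that $R^{\hh{\QG}}(\sN_i)=\sN_i$ and $\tau^{\hh{\QG}}_t(\sN_i)=\sN_i$ together with the fact that applying a (anti-)automorphism commutes with passing to the generated von Neumann algebra.

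The main point, and the step requiring the most care, is invariance, i.e.\ $\Delta_{\hh{\QG}}(\sM)\subset\sM\vtens\sM$. Let $\sA$ denote the unital $*$-algebra generated algebraically by $\bigcup_i\sN_i$ (finite linear combinations of finite products). For a product $a_1\cdots a_k$ with $a_l\in\sN_{i_l}$ one has $\Delta_{\hh{\QG}}(a_1\cdots a_k)=\Delta_{\hh{\QG}}(a_1)\cdots\Delta_{\hh{\QG}}(a_k)$, and each factor lies in $\sN_{i_l}\vtens\sN_{i_l}\subset\sM\vtens\sM$; as $\sM\vtens\sM$ is an algebra, the product stays there, so $\Delta_{\hh{\QG}}(\sA)\subset\sM\vtens\sM$. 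Now $\sM$ is the $\sigma$-weak closure of $\sA$, the coproduct $\Delta_{\hh{\QG}}$ is normal, and $\sM\vtens\sM$ is $\sigma$-weakly closed; approximating an arbitrary $x\in\sM$ $\sigma$-weakly by elements of $\sA$ and passing to the limit yields $\Delta_{\hh{\QG}}(x)\in\sM\vtens\sM$. This establishes invariance and completes the verification that $\sM$ is a central Baaj--Vaes subalgebra, hence the required largest element. The only genuine obstacle is this last step, where one must exploit normality of the coproduct and $\sigma$-weak closedness of the von Neumann tensor product in order to promote the evident algebraic containment on generators to the full generated von Neumann algebra.
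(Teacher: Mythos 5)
Your proof is correct and follows exactly the paper's approach: the paper simply declares that the von Neumann algebra generated by the union of all central Baaj--Vaes subalgebras is ``clearly'' itself a central Baaj--Vaes subalgebra, while you spell out the verifications (centrality, $R$- and $\tau$-invariance via compatibility of (anti-)automorphisms with generation, and invariance of the coproduct via multiplicativity on the algebraic span plus normality of $\Delta_{\hh{\QG}}$ and $\sigma$-weak closedness of $\sM\vtens\sM$). All the details you supply are sound.
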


\begin{proof}
This is quite obvious: the set of central Baaj-Vaes subalgebras of $\hh{\sZ}$ is non-empty, as it contains $\CC\I$. Its largest element is the von Neumann subalgebra generated by the union of all such subalgebras. Clearly it is contained in $\hh{\sZ}$ and is itself a Baaj-Vaes subalgebra.
\end{proof}

\begin{definition}\label{DefZG}
The \emph{center} of the locally compact quantum group $\QG$ is the closed subgroup $\cZ(\QG)$ of $\QG$ such that $\Linf\bigl(\hh{\cZ(\QG)}\bigr)$ is the largest central Baaj-Vaes subalgebra of $\Linf(\hh{\QG})$.
\end{definition}

The center of a locally compact quantum group $\QG$ is the largest subgroup whose ``inclusion morphism'' is \emph{central}, as defined by \cite{wangsimple} (see also \cite{Patri} and in particular \cite{Chirva} where, more generally, central actions are discussed).

\begin{definition}
Let $\QG$ and $\QH$ be locally compact quantum groups and let $\pi\in\Mor\bigl(\C_0^\uu(\QG),\C_0^\uu(\QH)\bigr)$ be a Hopf $*$-homomorphism (\cite{MRW}). We say that $\pi$ is \emph{central} if $(\id\tens\pi)\comp\Delta_\QG^\uu=(\id\tens\pi)\comp\flip\comp\Delta_\QG^\uu$. A closed subgroup $\QH$ of $\QG$ is \emph{central} if the corresponding surjection $\pi_\QH\colon\C_0^\uu(\QG)\to\C_0^\uu(\QH)$ is central.
\end{definition}

The next theorem makes precise the statement that $\cZ(\QG)$ is the largest central subgroup of $\QG$.

\begin{theorem}\label{thmZG}
\noindent
\begin{enumerate}
\item\label{centr1}\sloppy
Let $\QH$ be a central subgroup of $\QG$. Then the range of the corresponding inclusion $\gamma\colon\Linf(\hh{\QH})\hookrightarrow\Linf(\hh{\QG})$ is contained in $\hh{\sZ}$.
\item\label{centr2} The morphism $\pi_\QH$ factors through $\pi_{\cZ(\QG)}\colon\C_0^\uu(\QG)\to\C_0^\uu\bigl(\cZ(\QG)\bigr)$.
\end{enumerate}
\end{theorem}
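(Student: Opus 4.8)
The plan is to prove (\ref{centr1}) by translating the abstract centrality of $\pi_\QH$ into a concrete commutation relation on the level of the multiplicative unitaries, and then to deduce (\ref{centr2}) from (\ref{centr1}) together with the maximality built into Definition \ref{DefZG} and the functoriality of the Baaj-Vaes correspondence.

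For (\ref{centr1}) I would first record the bicharacter $V:=(\id\tens\pi_\QH)\WW^{\QG}\in\M\bigl(\C_0^\uu(\hh{\QG})\tens\C_0^\uu(\QH)\bigr)$ describing the inclusion morphism $\QH\to\QG$; denoting its reduction by the same symbol, it lies in $\Linf(\hh{\QG})\vtens\Linf(\QH)$ and, since $\QH$ is a closed subgroup in the sense of Vaes with embedding $\gamma$, it equals $(\gamma\tens\id)\ww^{\QH}$. Consequently the slices $(\id\tens\mu)V=\gamma\bigl((\id\tens\mu)\ww^{\QH}\bigr)$, $\mu\in\Linf(\QH)_*$, generate $\gamma\bigl(\Linf(\hh{\QH})\bigr)$. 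Next I would feed the second leg of $\WW^{\QG}$ into the centrality identity $(\id\tens\pi_\QH)\comp\Delta_\QG^\uu=(\id\tens\pi_\QH)\comp\flip\comp\Delta_\QG^\uu$; using the bicharacter relation $(\id\tens\Delta_\QG^\uu)\WW^{\QG}=\WW^{\QG}_{12}\WW^{\QG}_{13}$, the left-hand side becomes $\WW^{\QG}_{12}V_{13}$ and the right-hand side becomes $V_{13}\WW^{\QG}_{12}$, so that centrality is equivalent to
\[
\WW^{\QG}_{12}V_{13}=V_{13}\WW^{\QG}_{12}.
\]
Passing to the reduced level and slicing the (disjoint) second and third legs by $\omega\in\Linf(\QG)_*$ and $\mu\in\Linf(\QH)_*$ turns this into $XY=YX$, where $X=(\id\tens\omega)\ww^{\QG}$ runs over generators of $\Linf(\hh{\QG})$ and $Y=(\id\tens\mu)V$ runs over generators of $\gamma(\Linf(\hh{\QH}))$. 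Hence $\gamma(\Linf(\hh{\QH}))$ commutes with $\Linf(\hh{\QG})$; being itself contained in $\Linf(\hh{\QG})$, it lies in the center $\hh{\sZ}$, as required.

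For (\ref{centr2}) I would observe that by the Baaj-Vaes theorem $\gamma(\Linf(\hh{\QH}))$ is a Baaj-Vaes subalgebra of $\Linf(\hh{\QG})$, and by (\ref{centr1}) it is central; so by the maximality in Definition \ref{DefZG} it is contained in $\Linf\bigl(\hh{\cZ(\QG)}\bigr)$. Since the unitary antipode, scaling group and coproduct of $\hh{\cZ(\QG)}$ are the restrictions of those of $\hh{\QG}$ (part of the content of the Baaj-Vaes theorem), $\gamma(\Linf(\hh{\QH}))$ is in fact a Baaj-Vaes subalgebra of $\Linf(\hh{\cZ(\QG)})$, so a second application of the theorem — now inside $\cZ(\QG)$ — exhibits $\QH$ as a closed subgroup of $\cZ(\QG)$ and produces a surjection $\theta\colon\C_0^\uu(\cZ(\QG))\to\C_0^\uu(\QH)$ whose associated embedding $\gamma'$ satisfies $\iota\comp\gamma'=\gamma$, with $\iota\colon\Linf(\hh{\cZ(\QG)})\hookrightarrow\Linf(\hh{\QG})$. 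By functoriality of the morphism/bicharacter correspondence the composite $\theta\comp\pi_{\cZ(\QG)}$ corresponds to the embedding $\iota\comp\gamma'=\gamma$, which is exactly the embedding determining $\pi_\QH$; by the uniqueness in this correspondence $\theta\comp\pi_{\cZ(\QG)}=\pi_\QH$, i.e.\ $\pi_\QH$ factors through $\pi_{\cZ(\QG)}$.

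I expect the main obstacle to be the first part: matching conventions so that the centrality identity really collapses to the single commutation $\WW^{\QG}_{12}V_{13}=V_{13}\WW^{\QG}_{12}$ (keeping careful track of which leg lives in which algebra and of the passage between the universal and reduced levels), and justifying that the relevant slices generate $\Linf(\hh{\QG})$ and $\gamma(\Linf(\hh{\QH}))$ respectively. The factorization in (\ref{centr2}) is then essentially formal, the only delicate point being the verification that the Baaj-Vaes data of $\hh{\cZ(\QG)}$ are restrictions of those of $\hh{\QG}$, which legitimizes the second application of the Baaj-Vaes theorem.
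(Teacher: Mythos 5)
Your proposal is correct and follows essentially the same route as the paper: part (\ref{centr1}) is proved by exactly the same computation (applying the centrality identity to the second leg of the universal multiplicative unitary to obtain $\WW^{\QG}_{12}V_{13}=V_{13}\WW^{\QG}_{12}$, reducing, and slicing against the generating sets), and part (\ref{centr2}) is the same maximality-plus-factorization argument, which the paper isolates as a separate lemma and justifies by explicitly composing bicharacters in the sense of Meyer--Roy--Woronowicz where you invoke functoriality of the morphism/bicharacter correspondence.
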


\begin{proof}
Since $\QH$ is a closed subgroup of $\QG$, we have the normal inclusion $\gamma\colon\Linf(\hh{\QH})\hookrightarrow\Linf(\hh{\QG})$. Let $V\in\M\bigl(\C_0(\hh{\QG})\tens\C_0(\QH)\bigr)$ be the corresponding bicharacter. The results of \cite{DKSS} show that we have
\begin{equation}\label{Vpi}
V=(\Lambda_{\hh{\QG}}\tens\Lambda_{\QH})(\id\tens\pi_\QH)(\WW^\QG),
\end{equation}
and
\begin{equation}\label{Vim}
\gamma\bigl(\Linf(\hh{\QH})\bigr)=\linW\bigl\{(\id\tens\omega)(V)\st\omega\in\B(\Ltwo(\QH))_*\bigr\}.
\end{equation}
Now $\QH$ is central, i.e.~$(\id\tens\pi_\QH)\comp\Delta_\QG^\uu=(\id\tens\pi_\QH)\comp\flip\comp\Delta_\QG^\uu$, so applying both these maps to the second leg of $\wW^\QG$ yields
\begin{equation}\label{VW}
\wW_{12}^\QG[(\id\tens\pi_\QH)\wW^\QG]_{13}=[(\id\tens\pi_\QH)\wW^\QG]_{13}\wW_{12}^\QG.
\end{equation}
Now we apply $(\id\tens\Lambda_{\QG}\tens\Lambda_{\QH})$ to both sides of \eqref{VW} and by \eqref{Vpi} we find that
\[
{\ww_{12}^\QG}V_{13}=V_{13}\ww_{12}^\QG.
\]
In view of \eqref{Vim}, this proves that the image of $\gamma$ is a central subalgebra of $\Linf(\hh{\QG})$ which establishes \eqref{centr1}.

Statement \eqref{centr2} follows from Lemma \ref{HHG} below.
\end{proof}

\begin{lemma}\label{HHG}
Let $\QH_1$ and $\QH_2$ be closed quantum subgroups of a locally compact quantum group $\QG$ with corresponding inclusions and surjections
\[
\arraycolsep=0.1em
\left.\begin{array}{rl}
\gamma_i&\colon\Linf(\hh{\QH_i})\hookrightarrow\Linf(\hh{\QG}),\\
\pi_{\QH_i}&\in\Mor\bigl(\C_0^\uu(\QG),\C_0^\uu(\QH_i)\bigr),\rule{0pt}{3Ex}
\end{array}\right\}
\qquad\qquad{i}=1,2.
\]
Assume that the range of $\gamma_2$ is contained in the range $\gamma_1$. Then $\QH_2$ is a closed subgroup of $\QH_1$ and the corresponding morphism
$\theta\in\Mor\bigl(\C_0^\uu(\QH_1),\C_0^\uu(\QH_2)\bigr)$ satisfies
\begin{equation}\label{zgod}
\pi_{\QH_2}=\theta\comp\pi_{\QH_1}.
\end{equation}
\end{lemma}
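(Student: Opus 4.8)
The plan is first to manufacture the inclusion witnessing that $\QH_2$ is a closed subgroup of $\QH_1$ directly out of $\gamma_1$ and $\gamma_2$, and then to push the resulting identity down to the universal level. Being injective, normal and unital, $\gamma_1$ is a $*$-isomorphism of $\Linf(\hh{\QH_1})$ onto the von~Neumann subalgebra $\sN:=\gamma_1\bigl(\Linf(\hh{\QH_1})\bigr)$ of $\Linf(\hh{\QG})$. By hypothesis $\gamma_2\bigl(\Linf(\hh{\QH_2})\bigr)\subseteq\sN$, so
\[
\delta:=\gamma_1^{-1}\comp\gamma_2\colon\Linf(\hh{\QH_2})\longrightarrow\Linf(\hh{\QH_1})
\]
is a well-defined injective, normal, unital $*$-homomorphism, uniquely determined by the relation $\gamma_1\comp\delta=\gamma_2$.

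Next I would check that $\delta$ intertwines the coproducts, which together with injectivity identifies $\QH_2$ as a closed subgroup of $\QH_1$ in the sense of Vaes. Each $\gamma_i$ satisfies $\Delta_{\hh{\QG}}\comp\gamma_i=(\gamma_i\tens\gamma_i)\comp\Delta_{\hh{\QH_i}}$, and the tensor square $\gamma_1\tens\gamma_1$ is again injective and normal. Hence for $x\in\Linf(\hh{\QH_2})$,
\[
(\gamma_1\tens\gamma_1)\bigl(\Delta_{\hh{\QH_1}}(\delta(x))\bigr)=\Delta_{\hh{\QG}}(\gamma_2(x))=(\gamma_2\tens\gamma_2)\bigl(\Delta_{\hh{\QH_2}}(x)\bigr)=(\gamma_1\tens\gamma_1)\bigl((\delta\tens\delta)\Delta_{\hh{\QH_2}}(x)\bigr),
\]
where the final equality uses $\gamma_2\tens\gamma_2=(\gamma_1\tens\gamma_1)\comp(\delta\tens\delta)$. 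Cancelling the injective map $\gamma_1\tens\gamma_1$ gives $\Delta_{\hh{\QH_1}}\comp\delta=(\delta\tens\delta)\comp\Delta_{\hh{\QH_2}}$. Thus $\delta$ is an injective, coproduct-intertwining, normal morphism, so $\QH_2$ is a closed subgroup of $\QH_1$; let $\theta\in\Mor\bigl(\C_0^\uu(\QH_1),\C_0^\uu(\QH_2)\bigr)$ be the corresponding morphism.

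It remains to establish \eqref{zgod}. The relation $\gamma_1\comp\delta=\gamma_2$ says that the Baaj-Vaes subalgebra of $\Linf(\hh{\QG})$ obtained by following the chain $\QH_2\subseteq\QH_1\subseteq\QG$ coincides with the one attached to the given inclusion $\QH_2\subseteq\QG$. I would then invoke the functoriality of the correspondence between closed quantum subgroups, bicharacters and universal Hopf $*$-morphisms from \cite{MRW,DKSS}: along such a chain the reduced inclusions compose to $\gamma_1\comp\delta$ precisely when the universal surjections compose to $\theta\comp\pi_{\QH_1}$, these being the two halves of the data of a single closed subgroup of $\QG$. Since by the Baaj-Vaes theorem the subalgebra $\gamma_2\bigl(\Linf(\hh{\QH_2})\bigr)=(\gamma_1\comp\delta)\bigl(\Linf(\hh{\QH_2})\bigr)$ determines that closed subgroup together with its universal surjection uniquely, the two surjections must agree, which is exactly $\pi_{\QH_2}=\theta\comp\pi_{\QH_1}$.

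The main obstacle is precisely this passage from the reduced von~Neumann maps $\gamma$ to the universal maps $\pi$, where the reduced/universal distinction has to be handled with care. If a self-contained argument is preferred, I would instead compute, via \eqref{Vpi}, the bicharacter $(\Lambda_{\hh{\QG}}\tens\Lambda_{\QH_2})(\id\tens\theta)(\id\tens\pi_{\QH_1})(\WW^\QG)$ associated with $\theta\comp\pi_{\QH_1}$ and show it equals the bicharacter $(\Lambda_{\hh{\QG}}\tens\Lambda_{\QH_2})(\id\tens\pi_{\QH_2})(\WW^\QG)$ of the inclusion $\QH_2\subseteq\QG$; using \eqref{Vim} for $\gamma_2=\gamma_1\comp\delta$ together with the composition of bicharacters from \cite{MRW}, the identity \eqref{zgod} then follows from the injectivity of the bijective correspondence between bicharacters and Hopf $*$-morphisms.
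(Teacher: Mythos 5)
Your first step is exactly the paper's: the map you call $\delta$ is the paper's $\lambda=\gamma_1^{-1}\comp\gamma_2$, and your explicit verification that it intertwines the coproducts (which the paper leaves implicit before citing \cite{DKSS}) is correct. The problems are concentrated in your treatment of \eqref{zgod}. Your primary argument rests on the claim that, by the Baaj--Vaes theorem, the subalgebra $\gamma_2\bigl(\Linf(\hh{\QH_2})\bigr)$ determines the closed subgroup \emph{together with its universal surjection} uniquely. That is not true as stated: the Baaj--Vaes subalgebra pins down the subgroup only up to isomorphism, and two surjections $\C_0^\uu(\QG)\to\C_0^\uu(\QH_2)$ differing by an automorphism of $\QH_2$ produce the same image algebra in $\Linf(\hh{\QG})$ (take $\QG=\QH_1=\QH_2$ a classical group with a nontrivial automorphism to see this). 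What determines $\pi_{\QH_2}$ uniquely is the specific inclusion \emph{map} $\gamma_2$, via the bijection between normal coproduct-preserving injections, bicharacters and Hopf $*$-surjections. So the statement you actually need is that the inclusion attached to $\theta\comp\pi_{\QH_1}$ is the map $\gamma_1\comp\delta$; this ``functoriality'' is precisely the nontrivial content of the lemma and cannot be cited as given.

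Your second, ``self-contained'' paragraph names the right repair but does not carry it out, and that computation is the entire substance of the paper's proof. Concretely: set $V=(\gamma_1\tens\id)(\ww^{\QH_1})$, $U=(\gamma_2\tens\id)(\ww^{\QH_2})$ and $T=(\delta\tens\id)(\ww^{\QH_2})$, so that $(\gamma_1\tens\id)T=U$. The bicharacter identity $(\Delta_{\hh{\QH_1}}\tens\id)T=T_{23}T_{13}$ rewrites as
\[
T_{23}\ww^{\QH_1}_{12}=\ww^{\QH_1}_{12}T_{13}T_{23},
\]
and applying $\gamma_1\tens\id\tens\id$ yields $T_{23}V_{12}=V_{12}U_{13}T_{23}$, which says that $U$ is the composition of $V$ and $T$ in the sense of \cite{MRW}; Theorem 4.8 of \cite{MRW} then converts this into $\pi_{\QH_2}=\theta\comp\pi_{\QH_1}$. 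Until you exhibit this identity (or an equivalent verification that composing the reduced inclusions corresponds to composing the universal surjections), the proof of \eqref{zgod} is incomplete.
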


\begin{proof}
The fact that $\QH_2$ is a closed subgroup of $\QH_1$ follows immediately from \cite[Theorem 3.3]{DKSS}. Indeed, defining $\lambda=\gamma_1^{-1}\comp\gamma_2$ we obtain a normal inclusion $\Linf(\hh{\QH_2})\hookrightarrow\Linf(\hh{\QH_1})$ commuting with comultiplications. The bicharacters corresponding to the inclusion homomorphisms for $\QH_1\subset\QG$, $\QH_2\subset\QG$ and $\QH_2\subset\QH_1$ are respectively
\[
\begin{split}
V=(\gamma_1\tens\id)(\ww^{\QH_1})\in\Linf(\hh{\QG})\vtens\Linf(\QH_1),\\
U=(\gamma_2\tens\id)(\ww^{\QH_2})\in\Linf(\hh{\QG})\vtens\Linf(\QH_2),\\
T=(\lambda\tens\id)(\ww^{\QH_2})\in\Linf(\hh{\QH_1})\vtens\Linf(\QH_2).
\end{split}
\]
Note that $(\gamma_1\tens\id)T=U$.

\sloppy
The unitary $T$ is a bicharacter, so $(\Delta_{\hh{\QH_1}}\tens\id)T=T_{23}T_{13}$ which can be rewritten as $(\ww^{\QH_1}_{12})^*T_{23}\ww^{\QH_1}_{12}=T_{13}T_{23}$ or
\begin{equation}\label{TWT}
T_{23}\ww^{\QH_1}_{12}=\ww^{\QH_1}_{12}T_{13}T_{23}.
\end{equation}
Applying $(\gamma_1\tens\id\tens\id)$ to both sides of \eqref{TWT} and using multiplicativity of $\gamma_1$ we obtain
\[
T_{23}V_{12}=V_{12}U_{13}T_{23}
\]
which means precisely that the bicharacter $U$ is the composition of $V$ and $T$ according to \cite[Definition 3.5]{MRW}. By \cite[Theorem 4.8]{MRW} the corresponding Hopf $*$-homomorphisms $\pi_{\QH_1},\pi_{\QH_2}$ and $\theta$ satisfy \eqref{zgod}.
\end{proof}

\begin{remark}\label{remZG}
Let us note here some straightforward consequences of the definition and Theorem \ref{thmZG}:
\begin{itemize}
\item the quantum group $\cZ(\QG)$ is abelian (this follows from the fact that $\Linf\bigl(\hh{\cZ(\QG)}\bigr)$ is commutative),
\item if $\QG$ is a classical group then $\cZ(\QG)$ coincides with the classical center of $\QG$,
\item if $\QG$ is abelian then $\cZ(\QG)=\QG$;
\item if $\QG$ is compact, then the notion of $\cZ(\QG)$ introduced here coincides with those considered in Section 6 of \cite{Patri} and Section 2 of \cite{Chirva} (this follows from Theorem \ref{thmZG}),
\item if $\QG$ is such that $\Linf(\hh{\QG})$ is a factor then clearly $\cZ(\QG)$ is trivial ($\Linf\bigl(\hh{\cZ(\QG)}\bigr)$ is equal to $\CC\I$). This is the case e.g.~for $\QG$ one of the quantum ``$az+b$'' groups or the quantum ``$ax+b$'' groups (\cite{azb,nazb,axb}) as well as in many other examples (see e.g.~\cite{FimaF}).
\end{itemize}
\end{remark}

\begin{remark}\label{remZG2}
If $\QG$ is a compact quantum group then it is easily seen from Theorem \ref{thmZG} and results of Bichon, Neshveyev and Yamashita \cite[Lemmas 1.1 \& 1.3]{bny} that $\cZ(\QG)$ is the dual of the (discrete) \emph{chain group} $\mathrm{Ch}\bigl(\mathrm{Rep}(\QG)\bigr)$ of the representation category $\mathrm{Rep}(\QG)$ of $\QG$, i.e.~the universal group with a map from the set of equivalence classes of representations of $\QG$ with the property that $[U]$ is mapped to $[V][W]$ if $U$ is a subrepresentation of the tensor product $V\tp{W}$. This generalizes the results of \cite{Mug} and, more importantly, shows that the center of a compact quantum group is uniquely determined by its fusion semiring. In particular for any Drinfeld-Jimbo deformation $\QG_q$ of a compact semisimple Lie group $G$ (cf.~Section \ref{DJ}) we have $\cZ(G_q)\cong\cZ(G)$. This theme is developed more thoroughly in \cite{CHK}.
\end{remark}

More examples of determining the center of a locally compact quantum group will be given in Section \ref{examples}.

\begin{remark}\label{qbycentral}
Let $\QH$ be a closed quantum subgroup of a locally compact quantum group $\QG$ with corresponding inclusion $\gamma\colon\Linf(\hh{\QH})\hookrightarrow\Linf(\hh{\QG})$. It is easy to see that $\QH$ is central if and only if $\gamma\bigl(\Linf(\hh{\QH})\bigr)$ is a central subalgebra in $\Linf(\hh{\QG})$ (cf.~Theorem \ref{thmZG}). It is also clear that if $\QH$ is central then $\gamma\bigl(\Linf(\hh{\QH})\bigr)\subset\Linf(\hh{\QG})$ is a \emph{normal coideal} in the sense of \cite[Definition 4.1]{ext} (cf.~Definition \ref{DefNormSubgrp}). It follows that it is \emph{strongly normal} (\cite[Definition 4.5 \& Theorem 4.6]{ext}). Thus the co-dual of $\gamma\bigl(\Linf(\hh{\QH})\bigr)$ is a Baaj-Vaes subalgebra of $\Linf(\QG)$ and writing $\QG/\QH$ for the corresponding locally compact quantum group we get the short exact sequence $\xymatrix@1@C-2ex{\{e\}\ar[r]&\QH\ar[r]&\QG\ar[r]&\QG/\QH\ar[r]&\{e\}}$.
\end{remark}

Applying the conclusion of Remark \ref{qbycentral} to the subgroup $\cZ(\QG)$ of a locally compact quantum group $\QG$ we obtain the locally compact quantum group $\QG/\!\cZ(\QG)$ and short exact sequence
\begin{equation}\label{exactSequence}
\xymatrix{\{e\}\ar[r]&\cZ(\QG)\ar[r]&\QG\ar[r]&\QG/\!\cZ(\QG)\ar[r]&\{e\}.}
\end{equation}
In what follows we will analyze this short exact sequence. Note that, similarly to the compact quantum group case studied earlier by Patri and Chirvasitu, also here center is defined as a certain universal object and $\Linf\bigl(\cZ(\QG)\bigr)$ does not in general admit a concrete description. In the compact case the quantum group $\QG/\!\cZ(\QG)$ corresponds to what is called in \cite{Chirva} a \emph{cocenter} of $\QG$.

\begin{lemma}\label{altYZlemma}
\newcounter{c}
Let $\QG$ be a locally compact quantum group and let $\sX$ and $\sY$ be von Neumann subalgebras of $\Linf(\QG)$. Then if either
\begin{enumerate}
\item\label{aYZ1} $\Delta_\QG(\sX)\subset\sX\vtens\Linf(\QG)$ and $\Delta_\QG(\sX)\subset\sY\vtens\Linf(\QG)$
\setcounter{c}{\value{enumi}}
\end{enumerate}
or
\begin{enumerate}
\setcounter{enumi}{\value{c}}
\item\label{aYZ2} $\Delta_\QG(\sX)\subset\Linf(\QG)\vtens\sX$ and $\Delta_\QG(\sX)\subset\Linf(\QG)\vtens\sY$
\end{enumerate}
then $\sX\subset\sY$.
\end{lemma}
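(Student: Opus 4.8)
The plan is to treat case \eqref{aYZ1} in full and obtain case \eqref{aYZ2} by the symmetric argument, with the two tensor legs (and hence the roles of the right and left slices) interchanged. The only structural facts I will use are that for a $\sigma$-weakly closed subspace $\sN\subset\Linf(\QG)$ and any normal functional $\omega\in\Linf(\QG)_*$ the slice $\id\tens\omega$ maps $\sN\vtens\Linf(\QG)$ into $\sN$, and a suitable recovery of $x$ from $\Delta_\QG(x)$.

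First I would fix $x\in\sX$ and exploit the two hypotheses of \eqref{aYZ1} at once. For every $\omega\in\Linf(\QG)_*$ the element $(\id\tens\omega)\bigl(\Delta_\QG(x)\bigr)$ lies in $\sX$, since $\Delta_\QG(x)\in\sX\vtens\Linf(\QG)$, and it lies in $\sY$, since $\Delta_\QG(x)\in\sY\vtens\Linf(\QG)$; hence it lies in the $\sigma$-weakly closed intersection $\sX\cap\sY$. (One may package this through the identity $(\sX\vtens\Linf(\QG))\cap(\sY\vtens\Linf(\QG))=(\sX\cap\sY)\vtens\Linf(\QG)$, a consequence of the commutation theorem $(\sA\vtens\sB)'=\sA'\vtens\sB'$, but slicing the two containments separately already suffices.) The remaining task is to recover $x$ itself. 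Here I would invoke the standard fact that $\QG$ admits an approximate counit: a net $(\omega_i)$ of normal states on $\Linf(\QG)$ with $(\id\tens\omega_i)\comp\Delta_\QG\to\id$ and $(\omega_i\tens\id)\comp\Delta_\QG\to\id$ pointwise in the $\sigma$-weak topology. Applying this to $x$ gives $x=\lim_i(\id\tens\omega_i)\bigl(\Delta_\QG(x)\bigr)$ $\sigma$-weakly, and since each slice lies in the $\sigma$-weakly closed set $\sX\cap\sY$ so does $x$. As $x\in\sX$ was arbitrary, $\sX\subset\sX\cap\sY\subset\sY$, which is the assertion. For case \eqref{aYZ2} the hypotheses control the \emph{second} tensor leg, so one recovers $x$ through the left slices $(\omega_i\tens\id)\bigl(\Delta_\QG(x)\bigr)\in\sX\cap\sY$ instead, and the same conclusion follows.

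I expect the one genuinely delicate point to be the recovery step. A locally compact quantum group need not carry a normal bounded counit on $\Linf(\QG)$, so the convergence $(\id\tens\omega_i)\comp\Delta_\QG\to\id$ cannot be read off by naively evaluating a counit; it has to be justified from the structure theory of $\QG$ (the existence of the counit at the universal level $\C_0^\uu(\QG)$ together with a standard approximation by normal states, or equivalently the density of the slices of $\ww^\QG$). Everything else — the behaviour of slice maps on tensor products and the $\sigma$-weak closedness of $\sX\cap\sY$ — is routine.
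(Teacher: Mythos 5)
Your overall strategy --- slice $\Delta_\QG(x)$ into both hypotheses, observe that the slices land in the $\sigma$-weakly closed algebra $\sX\cap\sY$, and then recover $\sX$ from the slices --- is the same as the paper's, but the recovery mechanism you propose does not exist in general. A net $(\omega_i)$ of normal states on $\Linf(\QG)$ with $(\id\tens\omega_i)\comp\Delta_\QG\to\id$ pointwise $\sigma$-weakly is essentially a coamenability assumption. Concretely, take $\QG=\hh{\Gamma}$ for a non-amenable discrete group $\Gamma$ (say $\Gamma=\mathbb{F}_2$), so that $\Linf(\QG)=\vN(\Gamma)$ and $\Delta_\QG(\lambda_g)=\lambda_g\tens\lambda_g$. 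Then $(\id\tens\omega_i)\Delta_\QG(\lambda_g)=\omega_i(\lambda_g)\lambda_g$, so your condition forces $\omega_i(\lambda_g)\to1$ for every $g$; since $g\mapsto\omega_i(\lambda_g)$ is a normalized positive definite function which is a coefficient of (a multiple of) the regular representation, this would make the trivial representation weakly contained in the regular one, i.e.\ $\Gamma$ amenable. Neither of the two justifications you sketch repairs this: the counit lives on $\C_0^\uu(\QG)$ and its states cannot in general be approximated by normal states of $\Linf(\QG)$ in the required sense (that is again coamenability), and the density of slices of $\ww^\QG$ yields a different, weaker statement.

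That weaker statement is, however, exactly what you need, and it is what the paper uses. Under hypothesis (1) the restriction of $\Delta_\QG$ is a right action of $\QG$ on $\sX$, and the von Neumann algebraic Podle\'s condition holds automatically (\cite[Corollary 2.7]{proj}):
\[
\sX=\linW\bigl\{(\id\tens\omega)\Delta_\QG(x)\st x\in\sX,\ \omega\in\B(\Ltwo(\QG))_*\bigr\}.
\]
This recovers $\sX$ as a whole from the set of \emph{all} slices, rather than each individual $x$ from the slices of its own $\Delta_\QG(x)$; since by your own observation every such slice lies in the $\sigma$-weakly closed algebra $\sY$, the inclusion $\sX\subset\sY$ follows. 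With this substitution your argument for case (1) is complete. For case (2) your symmetric argument would need the left-handed version of this density statement; the paper instead applies the unitary antipode $R$, which turns $\sX$ into the right coideal $R(\sX)$ and reduces (2) to (1). Either route works once the correct density statement replaces the approximate counit.
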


\begin{proof}
Under assumption \eqref{aYZ1} the restriction $\bigl.\Delta\bigr|_{\sX}$ becomes a (right) action of $\QG$ on $\sX$. Therefore, by \cite[Corollary 2.7]{proj} we have
\[
\sX=\linW\bigl\{(\id\tens\omega)\Delta(x)\st{x}\in\sX,\:\omega\in\B(\Ltwo(\QG))_*\bigr\}.
\]
Since this set is contained in $\sY$ (by assumption) we have $\sX\subset\sY$.

In case of assumption \eqref{aYZ2}, the subalgebra $R(\sX)$ is a right coideal, so again
\[
R(\sX)=\linW\bigl\{(\id\tens\omega)\Delta(R(x))\st{x}\in\sX,\:\omega\in\B(\Ltwo(\QG))_*\bigr\}.
\]
It follows that $R(\sX)\subset{R(\sY)}$, so $\sX\subset\sY$.
\end{proof}

\begin{theorem}\label{LhatGthm}
The largest central Baaj-Vaes subalgebra of $\Linf(\hh{\QG})$ coincides with
\begin{equation}\label{defLhatG}
\hh{\sL}(\QG)=\bigl\{y\in\Linf(\hh{\QG})\st\Delta_{\hh{\QG}}(y)\in\Linf(\hh{\QG})\vtens\hh{\sZ}\bigr\}.
\end{equation}
\end{theorem}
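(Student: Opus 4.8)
The plan is to show that the von Neumann algebra $\hh{\sL}(\QG)$ of \eqref{defLhatG} is itself a central Baaj-Vaes subalgebra of $\Linf(\hh\QG)$, and conversely that every central Baaj-Vaes subalgebra is contained in it; by Definition \ref{DefZG} this identifies $\hh{\sL}(\QG)$ with the largest such object. The reverse containment is immediate: if $\sN$ is a central Baaj-Vaes subalgebra then $\sN\subseteq\hh\sZ$ and, being invariant, $\Delta_{\hh\QG}(\sN)\subseteq\sN\vtens\sN\subseteq\Linf(\hh\QG)\vtens\hh\sZ$, whence $\sN\subseteq\hh{\sL}(\QG)$. The work is therefore to verify that $\hh{\sL}(\QG)$ satisfies all five conditions in the definition of a central Baaj-Vaes subalgebra.

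First I would record that $\hh{\sL}(\QG)=\Delta_{\hh\QG}^{-1}\bigl(\Linf(\hh\QG)\vtens\hh\sZ\bigr)$ is a unital von Neumann subalgebra of $\Linf(\hh\QG)$, being the preimage of one under the normal unital $*$-homomorphism $\Delta_{\hh\QG}$. The crux is then the following consequence of coassociativity: for $y\in\hh{\sL}(\QG)$ one has $\Delta_{\hh\QG}(y)\in\Linf(\hh\QG)\vtens\hh\sZ$, so $(\Delta_{\hh\QG}\tens\id)\Delta_{\hh\QG}(y)$ lies in $\bigl(\Linf(\hh\QG)\vtens\Linf(\hh\QG)\bigr)\vtens\hh\sZ$ — its third leg stays central because $\Delta_{\hh\QG}\tens\id$ acts trivially there — and hence by coassociativity $(\id\tens\Delta_{\hh\QG})\Delta_{\hh\QG}(y)$ also has its third leg in $\hh\sZ$. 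Slicing the first, respectively the third, leg of this triple then shows that each $(\omega\tens\id)\Delta_{\hh\QG}(y)$ and each $(\id\tens\omega)\Delta_{\hh\QG}(y)$ again lies in $\hh{\sL}(\QG)$; by the slice-map characterization of the von Neumann tensor product this says precisely that $\Delta_{\hh\QG}(y)\in\Linf(\hh\QG)\vtens\hh{\sL}(\QG)$ and $\Delta_{\hh\QG}(y)\in\hh{\sL}(\QG)\vtens\Linf(\hh\QG)$, i.e.\ that $\hh{\sL}(\QG)$ is at once a left and a right coideal, hence an invariant subalgebra.

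Centrality, $\hh{\sL}(\QG)\subseteq\hh\sZ$, then follows painlessly: since $\hh{\sL}(\QG)$ is a left coideal, the recovery formula of \cite[Corollary 2.7]{proj} in its left-handed form (obtained, as in the proof of Lemma \ref{altYZlemma}, by applying the unitary antipode) gives $\hh{\sL}(\QG)=\linW\bigl\{(\omega\tens\id)\Delta_{\hh\QG}(y)\st y\in\hh{\sL}(\QG),\ \omega\in\Linf(\hh\QG)_*\bigr\}$, and every slice $(\omega\tens\id)\Delta_{\hh\QG}(y)$ lies in $\hh\sZ$ because $\Delta_{\hh\QG}(y)\in\Linf(\hh\QG)\vtens\hh\sZ$; as $\hh\sZ$ is $\sigma$-weakly closed, the whole algebra sits inside it. For the last two conditions I would invoke the intertwining relations $\Delta_{\hh\QG}\comp\tau^{\hh\QG}_t=(\tau^{\hh\QG}_t\tens\tau^{\hh\QG}_t)\comp\Delta_{\hh\QG}$ and $\Delta_{\hh\QG}\comp R^{\hh\QG}=\flip\comp(R^{\hh\QG}\tens R^{\hh\QG})\comp\Delta_{\hh\QG}$: since $\tau^{\hh\QG}_t$ and $R^{\hh\QG}$ are (anti-)automorphisms of $\Linf(\hh\QG)$ preserving its center $\hh\sZ$, applying them to $\Delta_{\hh\QG}(y)$ and using the already-established invariance and centrality keeps the relevant leg inside $\hh\sZ$, so $\tau^{\hh\QG}_t(y),R^{\hh\QG}(y)\in\hh{\sL}(\QG)$; involutivity of $R^{\hh\QG}$ and the group property of $(\tau^{\hh\QG}_t)_{t}$ upgrade these inclusions to equalities.

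The step I expect to be delicate is the coassociativity argument together with the passage from ``all slices lie in $\hh{\sL}(\QG)$'' to the two coideal inclusions, for it is exactly here that one must check that centrality of the third leg is genuinely preserved by $\Delta_{\hh\QG}\tens\id$ and that the slice-map description of $\Linf(\hh\QG)\vtens\hh{\sL}(\QG)$ is applied to the correct leg. Everything else — the reverse inclusion, the von Neumann algebra structure, and the $R^{\hh\QG}$/$\tau^{\hh\QG}$ invariance — is then routine, and combining the two inclusions with Definition \ref{DefZG} finishes the proof.
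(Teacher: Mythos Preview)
Your left--coideal argument and the deduction of centrality via Lemma \ref{altYZlemma} are exactly right, as is the reverse inclusion. The gap is in the step where you pass to the \emph{right} coideal condition. You argue that the triple $T=(\id\tens\Delta_{\hh\QG})\Delta_{\hh\QG}(y)=(\Delta_{\hh\QG}\tens\id)\Delta_{\hh\QG}(y)$ has its third leg in $\hh\sZ$, and then claim that slicing this third leg with $\omega$ shows $(\id\tens\omega)\Delta_{\hh\QG}(y)\in\hh{\sL}(\QG)$. But unwinding, $(\id\tens\id\tens\omega)T=\Delta_{\hh\QG}\bigl((\id\tens\omega)\Delta_{\hh\QG}(y)\bigr)$, and for this to lie in $\Linf(\hh\QG)\vtens\hh\sZ$ you would need the \emph{second} leg of $T$ to be central --- the only information you have, namely $T\in\Linf(\hh\QG)\vtens\Linf(\hh\QG)\vtens\hh\sZ$, says nothing about that leg. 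So the right coideal inclusion, and hence invariance, is not established.

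This gap then propagates: your $R^{\hh\QG}$-invariance argument explicitly invokes ``the already-established invariance and centrality''. Without invariance you only get $\Delta_{\hh\QG}(R^{\hh\QG}(y))=\flip(R^{\hh\QG}\tens R^{\hh\QG})\Delta_{\hh\QG}(y)\in\hh\sZ\vtens\Linf(\hh\QG)$, with the legs on the wrong side. The paper closes exactly this hole by a direct computation with the multiplicative unitary $\hh\ww=\ww^{\hh\QG}$: using $y\in\hh\sZ$ and the pentagon relation one checks that $\hh\ww_{12}^*\Delta_{\hh\QG}(y)_{23}\hh\ww_{12}=\Delta_{\hh\QG}(y)_{23}$, so the left leg of $\Delta_{\hh\QG}(y)$ commutes with all right slices of $\hh\ww$ and is therefore central. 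This gives the symmetric description $\hh{\sL}(\QG)=\{y:\Delta_{\hh\QG}(y)\in\hh\sZ\vtens\hh\sZ\}$, from which $R^{\hh\QG}$-invariance (and then the right coideal property, via $R^{\hh\QG}$ exchanging left and right coideals) is immediate. Some such argument producing centrality of the \emph{first} leg of $\Delta_{\hh\QG}(y)$ is genuinely needed; coassociativity alone will not supply it.
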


\begin{proof}
It is clear that for $x\in\Linf\bigl(\hh{\cZ(\QG)}\bigr)$ (i.e.~in the largest central Baaj-Vaes subalgebra of $\Linf(\hh{\QG})$) we have
\[
\Delta_{\hh{\QG}}(x)\in\Linf\bigl(\hh{\cZ(\QG)}\bigr)\vtens\Linf\bigl(\hh{\cZ(\QG)}\bigr)\subset\hh{\sZ}\vtens\hh{\sZ}\subset\Linf(\hh{\QG})\vtens\hh{\sZ},
\]
so $\Linf\bigl(\hh{\cZ(\QG)}\bigr)\subset\hh{\sL}(\QG)$.

The set $\hh{\sL}(\QG)$ is a left coideal in $\Linf(\hh{\QG})$ because if $y\in\hh{\sL}(\QG)$ then for any $\omega\in\B\bigl(\Ltwo(\QG)\bigr)_*$
\[
\begin{split}
\Delta_{\hh{\QG}}\bigl((\omega\tens\id)\Delta_{\hh{\QG}}(y)\bigr)&=(\omega\tens\id\tens\id)\bigl((\id\tens\Delta_{\hh{\QG}})\Delta_{\hh{\QG}}(y)\bigr)\\
&=(\omega\tens\id\tens\id)\bigl((\Delta_{\hh{\QG}}\tens\id)\Delta_{\hh{\QG}}(y)\bigr)\\
&\in(\omega\tens\id\tens\id)\bigl((\Delta_{\hh{\QG}}\tens\id)\Delta_{\hh{\QG}}\bigl(\hh{\sL}(\QG)\bigr)\bigr)\\
&\subset(\omega\tens\id\tens\id)\bigl((\Delta_{\hh{\QG}}\tens\id)\bigl(\Linf(\hh{\QG})\vtens\hh{\sZ}\bigr)\bigr)\\
&\subset(\omega\tens\id\tens\id)\bigl(\Linf(\hh{\QG})\vtens\Linf(\hh{\QG})\vtens\hh{\sZ}\bigr)\\
&\subset\Linf(\hh{\QG})\vtens\hh{\sZ},
\end{split}
\]
so $(\omega\tens\id)\Delta_{\hh{\QG}}(y)\in\hh{\sL}(\QG)$. It is also immediately clear from \eqref{defLhatG} that $\hh{\sL}(\QG)$ is $\tau$-invariant.

Now Lemma \ref{altYZlemma}(2), with $\QG$ replaced by $\hh{\QG}$, $\sX=\hh{\sL}(\QG)$ and $\sY=\hh{\sZ}$ shows that $\hh{\sL}(\QG)\subset\hh{\sZ}$.

Let us show that for $y\in\hh{\sL}(\QG)$ we actually have
\[
\Delta_{\hh{\QG}}(y)\in\hh{\sZ}\vtens\hh{\sZ}.
\]
Indeed, take $y\in\hh{\sL}(\QG)$ and let us write $\hh{\ww}$ for $\ww^{\hh{\QG}}$. We have
\[
\begin{split}
\hh{\ww}_{12}^*\Delta_{\hh{\QG}}(y)_{23}\hh{\ww}_{12}&=\hh{\ww}_{12}^*\hh{\ww}_{23}(\I\tens{y}\tens\I)\hh{\ww}_{23}^*\hh{\ww}_{12}\\
&=\hh{\ww}_{12}^*\hh{\ww}_{23}\hh{\ww}_{12}(\I\tens{y}\tens\I)\hh{\ww}_{12}^*\hh{\ww}_{23}^*\hh{\ww}_{12}\\
&=\hh{\ww}_{13}\hh{\ww}_{23}(\I\tens{y}\tens\I)\hh{\ww}_{23}^*\hh{\ww}_{13}^*\\
&=\hh{\ww}_{13}\bigl(\I\tens\Delta_{\hh{\QG}}(y)\bigr)\hh{\ww}_{13}^*\\
&=\I\tens\Delta_{\hh{\QG}}(y)=\Delta_{\hh{\QG}}(y)_{23},
\end{split}
\]
where in the second step we used the fact that $y\in\hh{\sZ}$ and in the fifth that the right leg of $\Delta_{\hh{\QG}}(y)$ is in $\hh{\sZ}$. Slicing the equality
\[
\Delta_{\hh{\QG}}(y)_{23}\hh{\ww}_{12}= \hh{\ww}_{12}\Delta_{\hh{\QG}}(y)_{23}
\]
over the first and third leg we find that the left leg of $\Delta_{\hh{\QG}}(y)$ is in the center of $\Linf(\hh{\QG})$.

It follows that we can rewrite \eqref{defLhatG} in the following way
\begin{equation}\label{defLhatG2}
\hh{\sL}(\QG)=\bigl\{y\in\Linf(\hh{\QG})\st\Delta_{\hh{\QG}}(y)\in\hh{\sZ}\vtens\hh{\sZ}\bigr\}.
\end{equation}
It is now easy to see that $\hh{\sL}(\QG)$ is a Baaj-Vaes subalgebra and we already know that it is contained in $\hh{\sZ}$. Clearly it is the largest central Baaj-Vaes subalgebra because any element $z$ of a central Baaj-Vaes subalgebra must necessarily satisfy
\[
\Delta_{\hh{\QG}}(z)\in\hh{\sZ}\vtens\hh{\sZ}
\]
so $z$ belongs to $\hh{\sL}(\QG)$.
\end{proof}

\begin{remark}
The object $\hh{\sL}(\QG)$ can be equivalently defined as the largest left coideal in $\Linf(\hh{\QG})$ contained in $\hh{\sZ}$. Indeed, we showed in the proof of Theorem \ref{LhatGthm} that $\hh{\sL}(\QG)$ is a left coideal contained in $\hh{\sZ}$. Let $\sL$ be a left coideal of $\Linf(\hh{\QG})$ contained in $\hh{\sZ}$. Then for $z\in\sL$ we have
\[
\Delta_{\hh{\QG}}(z)\in\Linf(\hh{\QG})\vtens\sL\subset\Linf(\hh{\QG})\vtens\hh{\sZ},
\]
so $z\in\hh{\sL}(\QG)$.
\end{remark}

\section{The quantum group $\Inn(\QG)$}\label{Inner}

In the previous section we introduced the quotient quantum group $\QG/\!\cZ(\QG)$ completing the exact sequence \eqref{exactSequence}. By definition $\Linf\bigl(\QG/\!\cZ(\QG)\bigr)$ is the co-dual of the coideal $\Linf\bigl(\hh{\cZ(\QG)}\bigr)\subset\Linf(\hh{\QG})$. In this section we provide another, explicit description of $\Linf\bigl(\QG/\!\cZ(\QG)\bigr)$, to be identified with $\Linf\bigl(\Inn(\QG)\bigr)$.

\begin{theorem}\label{MGZG}
Let $\sM$ be the von Neumann subalgebra of $\Linf(\QG)$ generated by
\begin{equation}\label{genBy}
\bigl\{(\omega\tens\id)(\ww(x\tens\I)\ww^*)\st{x}\in\Linf(\hh{\QG}),\:\omega\in\B(\Ltwo(\QG))_*\bigr\}.
\end{equation}
Then $\sM=\Linf\bigl(\QG/\!\cZ(\QG)\bigr)$.
\end{theorem}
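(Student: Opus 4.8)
The plan is to recognise $\sM$ as the co-dual of the coideal $\hh{\sL}(\QG)=\Linf(\hh{\cZ(\QG)})$ and to invoke the involutivity of the co-dual operation. By Remark~\ref{qbycentral} and Theorem~\ref{LhatGthm} the algebra $\Linf(\QG/\!\cZ(\QG))$ is by definition the co-dual of the Baaj--Vaes subalgebra $\hh{\sL}(\QG)$, that is $\hh{\sL}(\QG)'\cap\Linf(\QG)$, where $\hh{\sL}(\QG)$ is described by \eqref{defLhatG}. Since the co-dual is an involution on coideals, it is enough to establish two things: that $\sM$ is a coideal in $\Linf(\QG)$, and that its co-dual $\sM'\cap\Linf(\hh{\QG})$ equals $\hh{\sL}(\QG)$.

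I would phrase everything through the adjoint coaction $\mathrm{ad}(x)=\ww^{\QG}(x\tens\I){\ww^{\QG}}^{*}$, $x\in\Linf(\hh{\QG})$; note that $\mathrm{ad}(x)\in\Linf(\hh{\QG})\vtens\Linf(\QG)$, so its left slices $(\omega\tens\id)\mathrm{ad}(x)$ lie in $\Linf(\QG)$ and generate $\sM$. For the coideal property, the identity $(\id\tens\Delta_\QG)\ww^{\QG}=\ww^{\QG}_{12}\ww^{\QG}_{13}$ gives $(\id\tens\Delta_\QG)\mathrm{ad}(x)=\ww^{\QG}_{12}\,\mathrm{ad}(x)_{13}\,{\ww^{\QG}_{12}}^{*}$. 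Slicing the third leg by a functional $\psi$ and then the first leg by $\omega$ yields $(\id\tens\psi)\Delta_\QG\bigl((\omega\tens\id)\mathrm{ad}(x)\bigr)=(\omega\tens\id)\mathrm{ad}(r)$ with $r=(\id\tens\psi)\mathrm{ad}(x)\in\Linf(\hh{\QG})$, which again lies in $\sM$; hence $\Delta_\QG(\sM)\subset\sM\vtens\Linf(\QG)$, so $\sM$ is a (right) coideal and the co-dual machinery of \cite{KaspSol} is available.

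The core of the argument is the computation of $\sM'\cap\Linf(\hh{\QG})$. An element $y\in\Linf(\hh{\QG})$ lies in $\sM'$ exactly when $\I\tens y$ commutes with every $\mathrm{ad}(x)$, $x\in\Linf(\hh{\QG})$, because the left slices of $\mathrm{ad}(x)$ generate $\sM$ and separate points. Equivalently, $Z:={\ww^{\QG}}^{*}(\I\tens y)\ww^{\QG}$ commutes with $\Linf(\hh{\QG})\tens\CC\I$. Using the convention ${\ww^{\QG}}^{*}=\flip(\ww^{\hh{\QG}})$ one computes $Z=\flip\bigl(\Delta_{\hh{\QG}}(y)\bigr)$, so the condition becomes that $\Delta_{\hh{\QG}}(y)$ commutes with $\I\tens\Linf(\hh{\QG})$. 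As $\Delta_{\hh{\QG}}(y)\in\Linf(\hh{\QG})\vtens\Linf(\hh{\QG})$, slicing the first leg forces every slice into $\Linf(\hh{\QG})\cap\Linf(\hh{\QG})'=\hh{\sZ}$, i.e. $\Delta_{\hh{\QG}}(y)\in\Linf(\hh{\QG})\vtens\hh{\sZ}$. By \eqref{defLhatG} this says exactly that $y\in\hh{\sL}(\QG)$, so $\sM'\cap\Linf(\hh{\QG})=\hh{\sL}(\QG)$.

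Combining the two steps and applying the co-dual involution gives $\sM=\bigl(\sM'\cap\Linf(\hh{\QG})\bigr)'\cap\Linf(\QG)=\hh{\sL}(\QG)'\cap\Linf(\QG)=\Linf(\QG/\!\cZ(\QG))$. The inclusion $\sM\subset\Linf(\QG/\!\cZ(\QG))$ follows from the relative-commutant computation alone, so the real obstacle is the reverse inclusion: it rests entirely on the involutivity $\dd{\dd{\sM}}=\sM$ and hence on having verified that $\sM$ is a coideal. I expect the delicate points to be the leg bookkeeping in the identity $Z=\flip(\Delta_{\hh{\QG}}(y))$, where the convention $\ww^{\hh{\QG}}=\flip({\ww^{\QG}}^{*})$ must be used correctly, and the fact that $\sM$ appears as a right rather than left coideal, so the co-dual correspondence of \cite{KaspSol} has to be applied on the appropriate ($\op$) side.
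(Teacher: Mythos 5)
Your argument is essentially the paper's own proof: establish that $\sM$ is a coideal, compute the relative commutant $\sM'\cap\Linf(\hh{\QG})$ via the identity $\ww^*(\I\tens y)\ww=\flip\bigl(\Delta_{\hh{\QG}}(y)\bigr)$ to identify $\dd{\sM}$ with $\hh{\sL}(\QG)=\Linf\bigl(\hh{\cZ(\QG)}\bigr)$, and conclude by biduality of coideals. The one (easily repaired) deviation is that you verify the \emph{right} coideal property $\Delta_\QG(\sM)\subset\sM\vtens\Linf(\QG)$ by slicing the third leg, whereas the biduality $\dd{\dd{\sM}}=\sM$ is quoted from \cite{KaspSol} for \emph{left} coideals; the paper avoids your detour through the opposite side by instead slicing the first two legs of $\ww_{12}\ww_{13}(x\tens\I\tens\I)\ww_{13}^*\ww_{12}^*$, whose slices span the same set as those of $\ww_{13}(x\tens\I\tens\I)\ww_{13}^*$ and hence lie in $\sM$, giving $\Delta_\QG(\sM)\subset\Linf(\QG)\vtens\sM$ directly.
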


\begin{proof}
Let us first show that $\sM$ is a left coideal in $\Linf(\QG)$. For any $x\in\Linf(\hh{\QG})$ and $\omega\in\B\bigl(\Ltwo(\QG)\bigr)_*$ we have
\begin{align*}
\Delta_\QG\bigl((\omega\tens\id)(\ww(x\tens\I)\ww^*)\bigr)&= (\omega\tens\id\tens\id)\bigl( \id \tens\Delta_{\QG} (\ww(x\tens\I)\ww^*)\bigr)
\\&=(\omega\tens\id\tens\id)\bigl(\ww_{12}\ww_{13}(x\tens\I\tens\I)\ww^*_{13}\ww_{12}^*\bigr).
\end{align*}
Now, to see that the second leg of $\Delta_\QG\bigl((\omega\tens\id)(\ww(x\tens\I)\ww^*)\bigr)$ belongs to $\sM$, slice the left leg of the above equality with $\eta\in\B\bigl(\Ltwo(\QG)\bigr)_*$:
\[
(\eta\tens\id)\Delta_\QG\bigl((\omega\tens\id)(\ww(x\tens\I)\ww^*)\bigr)
=(\omega\tens\eta\tens\id)\bigl(\ww_{12}\ww_{13}(x\tens\I\tens\I)\ww^*_{13}\ww_{12}^*\bigr).
\]
It follows that the set
\[
\bigl\{(\eta\tens\id)\Delta_\QG\bigl((\omega\tens\id)(\ww(x\tens\I)\ww^*)\bigr)\st{x}\in\Linf(\hh{\QG}),\:\omega,\eta\in\B\bigl(\Ltwo(\QG)\bigr)_*\bigr\}
\]
coincides with
\[
\begin{split}
\bigl\{(\omega\tens\eta\tens\id)&\bigl(\ww_{12}\ww_{13}(x\tens\I\tens\I)\ww^*_{13}\ww_{12}^*\bigr)
\st{x}\in\Linf(\hh{\QG}),\:\omega,\eta\in\B\bigl(\Ltwo(\QG)\bigr)_*\bigr\}\\
&=\bigl\{(\omega'\tens\eta'\tens\id)\bigl(\ww_{13}(x\tens\I\tens\I)\ww^*_{13}\st{x}\in\Linf(\hh{\QG}),\:\omega',\eta'\in\B\bigl(\Ltwo(\QG)\bigr)_*\bigr\}\subset\sM.
\end{split}
\]

Let us now determine the co-dual of $\sM$. An element $y\in\Linf(\hh{\QG})$ belongs to $\dd{\sM}$ if and only if for any $x\in\Linf(\hh{\QG})$ we have
\[
\ww(x\tens\I)\ww^*(\I\tens{y})=(\I\tens{y})\ww(x\tens\I)\ww^*
\]
which is the same as saying that $\ww^*(\I\tens{y})\ww$ commutes with all elements of the form $x\tens\I$ with $x\in\Linf(\hh{\QG})$.

Note that $\ww^*(\I\tens{y})\ww=\flip\bigl(\Delta_{\hh{\QG}}(y)\bigr)$, and so it belongs to $\Linf(\hh{\QG})\vtens\Linf(\hh{\QG})$, and therefore we conclude that $y\in\Linf(\hh{\QG})$ is in $\dd{\sM}$ if and only if $\ww^*(\I\tens{y})\ww\in\hh{\sZ}\vtens\Linf(\hh{\QG})$ or, in other words,
\[
\dd{\sM}=\bigl\{y\in\Linf(\hh{\QG})\st\Delta_{\hh{\QG}}(y)\in\Linf(\hh{\QG})\vtens\hh{\sZ}\bigr\}.
\]
We have shown in Theorem \ref{defLhatG} that this is precisely the subalgebra $\Linf\bigl(\hh{\cZ(\QG)}\bigr)$ of $\Linf(\hh{\QG})$. It follows that
\[
\sM=\dd{\dd{\sM}}=\Linf\bigl(\QG/\!\cZ(\QG)\bigr).
\]
\end{proof}

\begin{corollary}\label{NCor}
The von Neumann algebra $\Linf\bigl(\QG/\!\cZ(\QG)\bigr)$ can also be described as the von Neumann subalgebra of $\Linf(\QG)$ generated by
\begin{equation}\label{eN}
\bigl\{(\omega\tens\id)(\ww^*(x\tens\I)\ww)\st{x}\in\Linf(\hh{\QG}),\:\omega\in\B(\Ltwo(\QG))_*\bigr\}.
\end{equation}
\end{corollary}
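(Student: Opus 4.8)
The plan is to deduce the corollary from Theorem \ref{MGZG} by applying the unitary antipode. Write $\sM'$ for the von Neumann subalgebra of $\Linf(\QG)$ generated by the set \eqref{eN}, and recall from Remark \ref{qbycentral} that $\sM=\Linf\bigl(\QG/\!\cZ(\QG)\bigr)$ is a Baaj--Vaes subalgebra of $\Linf(\QG)$; in particular $R^{\QG}(\sM)=\sM$, where $R^{\QG}$ is the unitary antipode of $\QG$. Thus it will suffice to show that $R^{\QG}$ carries the generating set \eqref{genBy} of $\sM$ onto the generating set \eqref{eN} of $\sM'$, for then $\sM'=R^{\QG}(\sM)=\sM=\Linf\bigl(\QG/\!\cZ(\QG)\bigr)$.

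The computation rests on two standard facts. First, the tensor product $R^{\hh{\QG}}\tens R^{\QG}$ of the unitary antipodes of $\hh{\QG}$ and $\QG$ is a normal, $*$-preserving anti-automorphism of $\Linf(\hh{\QG})\vtens\Linf(\QG)$ fixing the multiplicative unitary, $(R^{\hh{\QG}}\tens R^{\QG})(\ww)=\ww$. For $x\in\Linf(\hh{\QG})$ the element $\ww(x\tens\I)\ww^*$ again lies in $\Linf(\hh{\QG})\vtens\Linf(\QG)$ (each of the three factors does), so $R^{\hh{\QG}}\tens R^{\QG}$ may be applied to it; using anti-multiplicativity the order of the three factors is reversed and one obtains
\[
(R^{\hh{\QG}}\tens R^{\QG})\bigl(\ww(x\tens\I)\ww^*\bigr)=\ww^*\bigl(R^{\hh{\QG}}(x)\tens\I\bigr)\ww.
\]
Second, for any $Z\in\Linf(\hh{\QG})\vtens\Linf(\QG)$ there is the slicing identity
\[
R^{\QG}\bigl((\omega\tens\id)(Z)\bigr)=\bigl((\omega\comp R^{\hh{\QG}})\tens\id\bigr)\bigl((R^{\hh{\QG}}\tens R^{\QG})(Z)\bigr),
\]
which I would verify on elementary tensors (using $R^{\hh{\QG}}\comp R^{\hh{\QG}}=\id$) and extend by normality.

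Combining the two displays, for a generator $a=(\omega\tens\id)(\ww(x\tens\I)\ww^*)$ of \eqref{genBy} we get
\[
R^{\QG}(a)=\bigl((\omega\comp R^{\hh{\QG}})\tens\id\bigr)\bigl(\ww^*(R^{\hh{\QG}}(x)\tens\I)\ww\bigr),
\]
which is exactly a generator of \eqref{eN}, with functional $\omega\comp R^{\hh{\QG}}$ and element $R^{\hh{\QG}}(x)$. Since $\omega\mapsto\omega\comp R^{\hh{\QG}}$ is a bijection of $\B(\Ltwo(\QG))_*$ and $x\mapsto R^{\hh{\QG}}(x)$ a bijection of $\Linf(\hh{\QG})$, the map $R^{\QG}$ sends \eqref{genBy} onto \eqref{eN}; as $R^{\QG}$ is an involutive normal anti-automorphism it follows that $R^{\QG}(\sM)=\sM'$, whence $\sM'=\sM=\Linf\bigl(\QG/\!\cZ(\QG)\bigr)$. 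The only genuinely delicate point is to pin down the correct form of the antipode identity $(R^{\hh{\QG}}\tens R^{\QG})(\ww)=\ww$ in the present leg conventions: had the right-hand side read $\ww^*$, applying $R^{\QG}$ would reproduce \eqref{genBy} rather than \eqref{eN}. Everything else is a routine slicing argument.
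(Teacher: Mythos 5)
Your proof is correct and follows essentially the same route as the paper's: the paper conjugates the generators of \eqref{genBy} by $\hh{J}$ and invokes $(J\tens\hh{J})\ww(J\tens\hh{J})=\ww^*$, which is precisely your identity $(R^{\hh{\QG}}\tens R^{\QG})(\ww)=\ww$ written in terms of the modular conjugations implementing the unitary antipodes, and both arguments conclude via $R(\sM)=\sM$ from the Baaj--Vaes property of $\sM=\Linf\bigl(\QG/\!\cZ(\QG)\bigr)$. The sign/adjoint issue you flag is resolved correctly: the cited identity from Masuda--Nakagami--Woronowicz gives $(R^{\hh{\QG}}\tens R^{\QG})(\ww)=(J\tens\hh{J})\ww^*(J\tens\hh{J})=\ww$, as you assumed.
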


\begin{proof}
Denote by $\sN$ the the von Neumann subalgebra of $\Linf(\QG)$ generated by \eqref{eN}. For any $x\in\Linf(\hh{\QG})$ and $\omega\in\B\bigl(\Ltwo(\QG)\bigr)_*$ we have
\[
\begin{split}
\hh{J}\bigl((\omega\tens\id)\bigl(\ww(x\tens\I)\ww^*\bigr)\bigr) \hh{J}
&=(J\omega{J}\tens\id)\bigl((J\tens\hh{J})\ww(x\tens\I)\ww^*(J\tens\hh{J})\bigr)\\
&=(J\omega{J}\tens\id)\bigl(\ww^*(JxJ\tens\I)\ww\bigr)
\end{split}
\]
(cf.~\cite[Equation(5.22)]{mnw}). It follows that $\sN=R(\sM)=R\bigl(\Linf\bigl(\QG/\!\cZ(\QG)\bigr)\bigr)=\Linf\bigl(\QG/\!\cZ(\QG)\bigr)$.
\end{proof}

It follows immediately from Theorem \ref{MGZG} that the von Neumann subalgebra of $\Linf(\QG)$ generated by \eqref{genBy} is a Baaj-Vaes subalgebra. The proof of this fact given above is short and simple. We would like to stress, however, that one can prove this without referring to $\cZ(\QG)$ at all. More precisely, one can easily establish that, using the notation of the proof of Corollary \ref{NCor}, we have $\sN=R(\sM)$. But we can also show that actually $\sN=\sM$: take a selfadjoint $y\in\dd{\sM}$. Using the fact that $\dd{\sM}\subset\hh{\sZ}$ we get $\hh{J}y\hh{J}=y$ and compute
\[
\begin{split}
\ww(\I\tens{y})\ww^*&=\ww(J\tens\hh{J})(\I\tens\hh{J}y\hh{J})(J\tens\hh{J})\ww^*\\
&=(J\tens\hh{J})\ww^*(\I\tens{y})\ww(J\tens\hh{J})\\
&\quad\in(J\tens\hh{J})\bigl(\hh{\sZ}\vtens\Linf(\hh{\QG})\bigr)(J\tens\hh{J})\\
&\quad\subset\hh{\sZ}\vtens\Linf(\hh{\QG})'
\subset\hh{\sZ}\vtens\B\bigl(\Ltwo(\QG)\bigr)
\end{split}
\]
as the map $\Linf(\hh{\QG})\ni{u}\mapsto{J}u{J}\in\Linf(\hh{\QG})$ preserves $\hh{\sZ}$ (this map is the composition $\hh{R}\comp*$). It follows that each selfadjoint element of $\dd{\sM}$ belongs to $\dd{\sN}$, and hence
\begin{equation}\label{zaw}
\dd{\sM}\subset\dd{\sN}.
\end{equation}

Applying co-duality to both sides of \eqref{zaw} we obtain $\sM\supset\sN$ and, as $\sN=R(\sM)$, we have
\[
\sM\supset{R(\sM)}.
\]
Since $R^2=\id$, this shows that $\sM=R(\sM)=\sN$.

Thus, given a locally compact quantum group $\QG$, the von Neumann subalgebra $\sM$ of $\Linf(\QG)$ generated by
\[
\bigl\{(\omega\tens\id)(\ww(x\tens\I)\ww^*)\st{x}\in\Linf(\hh{\QG}),\:\omega\in\B(\Ltwo(\QG))_*\bigr\}
\]
is a Baaj-Vaes subalgebra. By the Baaj-Vaes theorem (cf.~Section \ref{intro}) the algebra $\sM$ with comultiplication $\bigl.\Delta_\QG\bigr|_{\sM}$ becomes an algebra of functions on a locally compact quantum group.

\begin{definition}\label{DefInnG}
Let $\QG$ be a locally compact quantum group and let $\sM\subset\Linf(\QG)$ be the subalgebra generated by
\[
\bigl\{(\omega\tens\id)(\ww(x\tens\I)\ww^*)\st{x}\in\Linf(\hh{\QG}),\:\omega\in\B(\Ltwo(\QG))_*\bigr\}.
\]
We define the locally compact quantum group $\Inn(\QG)$ by setting
\[
\Linf\bigl(\Inn(\QG)\bigr)=\sM\quad\text{and}\quad\Delta_{\Inn(\QG)}=\bigl.\Delta_{\QG}\bigr|_{\sM}.
\]
\end{definition}

In view of Theorem \ref{MGZG} we immediately conclude that $\Inn(\QG)=\QG/\!\cZ(\QG)$. Nevertheless we would like to keep the double terminology because, as we will see later on, the natural interpretation of $\Inn(\QG)$ is that of the group of inner automorphisms of $\QG$ (see Section \ref{InnAct}).

Thus, from \eqref{exactSequence} we immediately obtain existence of the following short exact sequence:
\[
\xymatrix{\{e\}\ar[r]&\cZ(\QG)\ar[r]&\QG\ar[r]&\Inn(\QG)\ar[r]&\{e\}.}
\]

\begin{remark}
An important point is to note that equality of $\Inn(\QG)$ and $\QG/\!\cZ(\QG)$ shows that the object defined by a universal property (namely $\QG/\!\cZ(\QG)$ defined via the universal property of $\cZ(\QG)$) has a much more concrete description given by our definition of $\Inn(\QG)$. This can be very helpful in concrete cases (see e.g.~Remark \ref{projVer}). We are grateful to Alexandru Chirvasitu for pointing out to us that for compact quantum groups this result can be also deduced from \cite[Proposition 2.13 (b)]{Chirva}.
\end{remark}

\subsection{$\Inn(\QG)$ for $\QG$ a compact quantum group}

In this subsection we let $\QG$ be a compact quantum group. We will denote by $\Irr(\QG)$ the set of equivalence classes of irreducible unitary representations of $\QG$ and for each $\iota\in\Irr(\QG)$ we will choose a representative $u^\iota$ of $\iota$. Then $u^\iota\in{M_{n_\iota}(\CC)}\tens\Pol(\QG)$, where $\Pol(\QG)$ is the Hopf $*$-algebra associated to $\QG$. The direct sum
\[
\bigoplus_{\iota\in\Irr(\QG)}u^\iota
\]
can be interpreted as either $\wW\in\M\bigl(\c0(\hh{\QG})\tens\C^\uu(\QG)\bigr)$ or as $\ww\in\linf(\hh{\QG})\vtens\Linf(\QG)$ depending on whether we view $\Pol(\QG)$ as a subalgebra of $\C^\uu(\QG)$ or $\C(\QG)$ (or $\Linf(\QG)$). Recall that $\hh{\QG}$ is coamenable, so $\wW=\WW$ and $\ww=\Ww$.

\begin{theorem}\label{cptThm}
Let $\QG$ be a compact quantum group and let $\sA$ be the \cst-subalgebra of $\C(\QG)$ generated by the subset
\begin{equation}\label{genA}
\bigl\{(\omega\tens\id)(\ww(x\tens\I)\ww^*)\st{x}\in\c0(\hh{\QG}),\:\omega\in\c0(\hh{\QG})^*\bigr\}.
\end{equation}
Then $\sA$ satisfies $\Delta_\QG(\sA)\subset\sA\tens\sA$ and $\QH$ defined by $\C(\QH)=\sA$ and $\Delta_\QH=\bigl.\Delta_\QG\bigr|_{\C(\QH)}$ is a compact quantum group. Moreover $\QH$ coincides with $\Inn(\QG)$.
\end{theorem}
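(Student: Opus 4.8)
The plan is to reduce everything to an explicit description of the generators of $\sA$ and then to separate the $\cst$-algebraic claims (coproduct invariance and the compact quantum group structure) from the von Neumann algebraic identification with $\sM=\Linf\bigl(\Inn(\QG)\bigr)$. First I would make the generators explicit. Since $\QG$ is compact, $\hh{\QG}$ is discrete, so $\c0(\hh{\QG})=\bigoplus_{\iota\in\Irr(\QG)}M_{n_\iota}(\CC)$ and $\ww=\bigoplus_{\iota}u^\iota=\sum_{\iota}\sum_{p,q}e^\iota_{pq}\tens u^\iota_{pq}$, where $e^\iota_{pq}$ are the matrix units of $M_{n_\iota}(\CC)$. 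A direct computation gives
\[
\ww(e^\iota_{ij}\tens\I)\ww^*=\sum_{p,r}e^\iota_{pr}\tens u^\iota_{pi}(u^\iota_{rj})^*,
\]
and slicing the first leg by the functionals dual to the matrix units yields the elements $u^\iota_{pi}(u^\iota_{rj})^*$. As the $e^\iota_{ij}$ span a dense subset of $\c0(\hh{\QG})$ and the corresponding functionals span a dense subset of $\c0(\hh{\QG})^*$, this shows that $\sA$ is the $\cst$-algebra generated by $\bigl\{u^\iota_{ab}(u^\iota_{cd})^*\st\iota\in\Irr(\QG),\ 1\le a,b,c,d\le n_\iota\bigr\}$, i.e.\ by the matrix coefficients of the representations $u^\iota\tens\overline{u^\iota}$; taking $\iota$ trivial shows in particular $\I\in\sA$.

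Next I would establish $\Delta_\QG(\sA)\subset\sA\tens\sA$. Since $\Delta_\QG$ is a unital $*$-homomorphism and $\sA\tens\sA$ is a $\cst$-subalgebra of $\C(\QG)\tens\C(\QG)$, it suffices to check the inclusion on generators. Using $\Delta_\QG(u^\iota_{ab})=\sum_e u^\iota_{ae}\tens u^\iota_{eb}$ one finds
\[
\Delta_\QG\bigl(u^\iota_{ab}(u^\iota_{cd})^*\bigr)=\sum_{e,f}\bigl[u^\iota_{ae}(u^\iota_{cf})^*\bigr]\tens\bigl[u^\iota_{eb}(u^\iota_{fd})^*\bigr],
\]
and each tensor factor is again a generator, so the right-hand side lies in $\sA\tens\sA$. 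For the compact quantum group structure I would observe that the family $\{u^\iota\tens\overline{u^\iota}\}_{\iota}$ is closed under conjugation up to unitary equivalence (as $\overline{u^\iota\tens\overline{u^\iota}}\cong u^{\bar\iota}\tens\overline{u^{\bar\iota}}$); hence $\sA$ is a Woronowicz $\cst$-subalgebra generated by matrix coefficients of finite-dimensional unitary representations, and the cancellation conditions hold. By the standard theory of Woronowicz subalgebras the pair $\QH=(\sA,\Delta_\QG|_\sA)$ is then a compact quantum group, a quotient of $\QG$.

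It remains to identify $\QH$ with $\Inn(\QG)$, and this is where the only real work lies. Recall $\sM=\Linf\bigl(\Inn(\QG)\bigr)$ is the von Neumann algebra generated by the same expressions but with $x$ ranging over $\Linf(\hh{\QG})$ and $\omega$ over $\B\bigl(\Ltwo(\QG)\bigr)_*$. The key point is that $\ww\in\linf(\hh{\QG})\vtens\Linf(\QG)$, so $\ww(x\tens\I)\ww^*$ has its first leg in $\Linf(\hh{\QG})$ for every $x\in\Linf(\hh{\QG})$; consequently a slice $(\omega\tens\id)(\ww(x\tens\I)\ww^*)$ depends on $\omega$ only through $\omega|_{\Linf(\hh{\QG})}$, and every normal functional on $\Linf(\hh{\QG})$ already lies in $\Linf(\hh{\QG})_*=\c0(\hh{\QG})^*$. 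Since moreover $\c0(\hh{\QG})$ is $\sigma$-weakly dense in $\Linf(\hh{\QG})$ and the map $x\mapsto(\omega\tens\id)(\ww(x\tens\I)\ww^*)$ is normal, every generator of $\sM$ is a $\sigma$-weak limit of generators of $\sA$; combined with the obvious inclusion $\sA\subset\sM$, this gives that the $\sigma$-weak closure of $\sA$ in $\Linf(\QG)$ equals $\sM$. Finally, since $\QH$ is a quotient compact quantum group of $\QG$, its Haar state is the restriction of the Haar state of $\QG$ (by uniqueness of the Haar state), so the GNS data are compatible and $\Linf(\QH)$ is exactly this $\sigma$-weak closure of $\sA$ inside $\Linf(\QG)$, namely $\sM$. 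As the comultiplications of $\QH$ and of $\Inn(\QG)$ are both restrictions of $\Delta_\QG$ and agree on the $\sigma$-weakly dense subalgebra $\sA$, I conclude $\QH=\Inn(\QG)$.

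I expect the main obstacle to be precisely this last identification: matching the $\cst$-level generating set (with $x\in\c0(\hh{\QG})$, $\omega\in\c0(\hh{\QG})^*$) to the von Neumann-level generating set of $\sM$ through normality and $\sigma$-weak density, and — more delicately — the spatial identification of the weak closure of the reduced algebra $\sA$ inside $\Linf(\QG)$ with $\Linf(\QH)$. I would secure the latter by appealing to the conditional expectation $\Linf(\QG)\to\sM$ coming from the quotient structure (equivalently, to the compatibility of Haar states), which guarantees that this weak closure is a faithful copy of $\Linf(\QH)$.
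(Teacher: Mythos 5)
Your proposal is correct and follows essentially the same route as the paper: computing the generators explicitly as the elements $u^\iota_{ki}\bigl(u^\iota_{lj}\bigr)^*$ via slices against matrix units, deducing the Woronowicz-algebra structure from invariance under the coproduct and conjugation/antipode, and identifying $\Linf(\QH)$ with $\Linf\bigl(\Inn(\QG)\bigr)$ through faithfulness of the Haar state and weak closure. If anything, you spell out more carefully than the paper why the $\sigma$-weak closure of $\sA$ exhausts $\sM$ (via normality of $x\mapsto(\omega\tens\id)(\ww(x\tens\I)\ww^*)$ and the identification $\c0(\hh{\QG})^*=\linf(\hh{\QG})_*$), a point the paper leaves implicit.
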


\begin{proof}
Let us take a matrix unit $e^\iota_{i,j}$ in a direct summand $M_{n_\iota}(\CC)$ of $\c0(\hh{\QG})=\bigoplus\limits_{\mu\in\Irr(\QG)}M_{n_\mu}(\CC)$ and let $\omega^\iota_{k,l}$ be the functional on $\c0(\hh{\QG})$ equal to $1$ on $e^\iota_{k,l}$ and $0$ on all other matrix units (from any block). Then
\[
(\omega^\iota_{k,l}\tens\id)\bigl(\ww(e^\iota_{i,j}\tens\I)\ww^*\bigr)
=(\omega^\iota_{k,l}\tens\id)\sum_{a,b}e_{a,b}^\iota\tens{u_{a,i}^\iota}{u_{b,j}^\iota}^*
={u_{k,i}^\iota}{u_{l,j}^\iota}^*.
\]
It follows that if $\cA$ is the $*$-algebra generated by
\[
\bigl\{u^\iota_{i,j}{u^\iota_{k,l}}^*\st\iota\in\Irr{\QG},1\leq{i},j,k,l\leq{n_\iota}\bigr\}
\]
then $\cA$ is dense in $\sA$. Moreover, $\cA$ is clearly invariant for $\Delta_\QG$ and for the antipode $S_\QG$. It follows that $\Delta_\QG(\sA)(\I\tens\sA)$ and $(\sA\tens\I)\Delta_\QG(\sA)$ are dense in $\sA\tens\sA$ because $\cA\tens\cA$ is invariant for the maps $(a\tens{b})\mapsto\bigl((\id\tens{S})\Delta_\QG(a)\bigr)(\I\tens{b})$ and $(a\tens{b})\mapsto(a\tens\I)\bigl((S\tens\id)\Delta_\QG(b)\bigr)$ which are inverse to $a\tens{b}\mapsto\Delta_\QG(a)(\I\tens{b})$ and $a\tens{b}\mapsto(a\tens\I)\Delta_\QG(b)$ respectively. Therefore $\sA$ has the structure of the algebra of functions on a compact quantum group. Since the Haar measure on $\C(\QG)$ is faithful, its restriction to $\C(\QH)$ is also faithful and so $\Linf(\QH)$ is the weak closure of $\sA$. It follows that $\Linf(\QH)$ is equal to $\Linf\bigl(\Inn(\QG)\bigr)$, and so $\QH=\Inn(\QG)$.
\end{proof}

\begin{remark}\label{projVer}
\noindent\begin{enumerate}
\item Consider $\QG=\mathrm{SU}_q(2)$ (\cite{su2}). Then it is easy to see that $\Inn(\QG)$ is the quantum group $\mathrm{SO}_q(3)$ (see \cite[Section 3]{podles}). In particular $\cZ(\QG)=\mathbb{Z}_2$ (cf.~Remark \ref{remZG2}, this can also be proved using \cite[Proposition 3.4]{podles}).
\item More generally, if $\QG$ is a compact matrix quantum group with irreducible fundamental representation and commutative fusion rules then $\Inn(\QG)$ is the \emph{projective version} of $\QG$ studied in \cite[Section 3]{BanVer}.
\item We thank the anonymous referee for the following observation. In fact when $\QG$ is a compact matrix quantum group with irreducible fundamental representation $u\in\M_n\tens\Pol(\QG)$ a more precise description of $\cZ(\QG)$ and $\Inn(\QG)$ is available: $\Pol\bigl(\cZ(\QG)\bigr)$ is the quotient of $\Pol(\QG)$ by the relations
\[
u_{i,j}=0\text{ if $i\neq{j}$,}\qquad{u_{i,i}}=u_{j,j},\quad{i,j}=1,\dotsc,n
\]
(this follows from irreducibility of $u$ and the universal property of $\cZ(\QG)$). In particular $\cZ(\QG)$ is a subgroup of $\TT$.

Let $g$ be the image of the diagonal matrix elements of $u$ in the epimorphism $\Pol(\QG)\to\Pol\bigl(\cZ(\QG)\bigr)$. Then the algebra $\Pol\bigl(\Inn(\QG)\bigr)$ is the $*$-subalgebra of $\Pol(\QG)$ generated by
\[
\bigl\{u^{e_1}_{i_1,j_1}{\dotsm}u^{e_k}_{{i_k},{j_k}}\st{k\in\NN},\;e_1,\dotsc,e_k\in\{1,*\},\;g^{e_1+\dotsm+e_k}=\I\bigr\}.
\]
Indeed, on one hand elements of the set above are invariant under the action of $\cZ(\QG)$ and on the other hand elements of the form $u^\alpha_{i,j}{u^\alpha_{k,l}}^*$ which generate $\Pol\bigl(\Inn(\QG)\bigr)$ clearly belong to this set.
\end{enumerate}
\end{remark}

\section{Action by inner automorphisms}\label{InnAct}

In this section we discuss natural actions of $\Inn(\QG)$ on the dual quantum group $\hh{\QG}$, which in a sense replace the action of $\Inn(G)$ on $G$ known in the classical setting.

Let $\QG$ be a locally compact quantum group. For any $x\in\Linf(\hh{\QG})$ we define
\begin{equation}\label{alpha}
\alR(x)=\ww(x\tens\I)\ww^*\quad\text{and}\quad\alL(x)=\flip\bigl(\ww^*(x\tens\I)\ww\bigr).
\end{equation}
Note that
\[
\alR(x)\in\Linf(\hh{\QG})\vtens\Linf\bigl(\Inn(\QG)\bigr)\quad\text{and}\quad
\alL(x)\in\Linf\bigl(\Inn(\QG)\bigr)\vtens\Linf(\hh{\QG})
\]
and thus we obtain two mappings
\[
\alR\colon\Linf(\hh{\QG})\to\Linf(\hh{\QG})\vtens\Linf\bigl(\Inn(\QG)\bigr)\quad\text{and}\quad
\alL\colon\Linf(\hh{\QG})\to\Linf\bigl(\Inn(\QG)\bigr)\vtens\Linf(\hh{\QG})
\]
which are easily shown to be actions of $\hh{\QG}$: e.g.~for $x\in\Linf(\hh{\QG})$ we have
\[
\begin{split}
(\id\tens\Delta_{\Inn(\QG)})\bigl(\alR(x)\bigr)&=(\id\tens\Delta_{\QG})\bigl(\alR(x)\bigr)\\
&=\ww_{12}\ww_{13}(x\tens\I\tens\I)\ww_{13}^*\ww_{12}^*\\
&=\ww_{12}\bigl[\ww(x\tens\I)\ww^*\bigr]_{13}\ww_{12}^*\\
&=(\alR\tens\id)\bigl(\ww(x\tens\I)\ww^*\bigr)=(\alR\tens\id)\bigl(\alR(x)\bigr).
\end{split}
\]
yielding $(\id\tens\Delta_{\Inn(\QG)})\comp\alR=(\alR\tens\id)\comp\alR$. Analogous formula $(\Delta_{\Inn(\QG)}\tens\id)\comp\alL=(\id\tens\alL)\comp\alL$ follows from a similar calculation or from the fact that
\[
\alL=\flip\comp(\hh{R}\tens{R})\comp\alR\comp\hh{R}.
\]

Suppose $\QG$ is a classical locally compact group $G$. Then for any $x\in\Linf(\hh{G})=\vN(G)$ (the group von Neumann algebra of $G$) the element $\ww^*(x\tens\I)\ww$ belongs to $\vN(G)\vtens\Linf(G)$, i.e.~it is an essentially bounded (in fact strongly continuous) function $G\to\vN(G)$. It is easy to see that it is the function
\[
G\ni{t}\longmapsto\rho_t^*x\rho_t,
\]
where $t\mapsto\rho_t$ is the right regular representation of $G$. In particular, for $x=\rho_s$ the element $\ww^*(x\tens\I)\ww$ corresponds to the function
\[
G\ni{t}\longmapsto\rho_{t^{-1}st}\in\vN(G).
\]
This justifies the terminology introduced in the following definition.

\begin{definition}
Let $\QG$ be a locally compact quantum group. The actions $\alL$ and $\alR$ of $\Inn(\QG)$ on $\Linf(\hh{\QG})$ introduced by \eqref{alpha} will be called respectively the left and right action by \emph{inner automorphisms}.
\end{definition}

\begin{remark}\label{RemNorm}
Let $\QG$ be a locally compact quantum group and let $\sL$ be a von Neumann subalgebra of $\Linf(\hh{\QG})$. The mapping $\alL\colon\Linf(\hh{\QG})\to\Linf\bigl(\Inn(\QG)\bigr)\vtens\Linf(\hh{\QG})$ can be restricted to $\sL$, but there is no guarantee that its values will lie in $\Linf\bigl(\Inn(\QG)\bigr)\vtens\sL$. In case this holds, we may call the subalgebra $\sL$ \emph{invariant under $\alL$} because $\bigl.\alL\bigr|_{\sL}$ defines a left action of $\Inn(\QG)$ on $\sL$.

In the particular case when $\sL$ is a left coideal the invariance under $\alL$ was studied in \cite[Section 4]{ext} and it was called \emph{normality}. Thus a left coideal of $\Linf(\hh{\QG})$ is normal if and only if it is invariant under the left action by inner automorphisms (cf.~\cite[Definition 4.1]{ext}, see also Section 1).

Note further that invariance under left and right action by inner automorphisms are equivalent conditions for $R$-invariant coideals.
\end{remark}

\begin{definition}[{\cite[Definition 2.10]{VaesVainerman2}}]\label{DefNormSubgrp}
Let $\QH$ be a closed quantum subgroup of a locally compact quantum group $\QG$ (recall this means that $\Linf(\hh{\QH})$ injects in a comultiplication preserving way into $\Linf(\hh{\QG})$). We say that $\QH$ is \emph{normal} in the sense of Vaes if $\Linf(\hh{\QH})\subset\Linf(\hh{\QG})$ is a normal coideal.
\end{definition}

Let us point out that Definition \ref{DefNormSubgrp} is not exactly the same as in \cite{VaesVainerman2} due to certain evolution of the notion of a closed quantum subgroup over time. Still, Remark \ref{RemNorm} clearly shows that a closed quantum subgroup $\QH$ of $\QG$ is normal if and only if $\Linf(\hh{\QH})\subset\Linf(\hh{\QG})$ is invariant under the action by inner automorphisms (left or right).

\subsection{Normal subgroups in the sense of Wang}

Already as early as in 1995 Wang defined the notion of a normal subgroup of a compact quantum group:

\begin{definition}[{\cite[Section 2]{free}}]\label{WangDef}
Let $\KK$ and $\LL$ be a compact quantum groups. We say that $\LL$ is a \emph{normal quantum subgroup} of $\KK$ in the sense of Wang if there is a surjective map $\rho\colon\C(\KK)\to\C(\LL)$ commuting with comultiplications such that for any irreducible representation $u$ of $\KK$ the multiplicity of the trivial representation in $(\id\tens\rho)u$ is either zero or equal to the dimension of $u$.
\end{definition}

\begin{remark}
Subgroups of compact quantum groups have been a topic of investigations for a much longer time than those of locally compact quantum groups (cf.~e.g.~\cite{PodPhd,spheres}). Initially a compact quantum group $\KK$ would most likely be described on the universal level, i.e.~by the \cst-algebra we now call $\C^\uu(\KK)$, or without any restriction on which completion of the associated Hopf $*$-algebra $\Pol(\KK)$ is used (see \cite[Definition 1.1]{pseudogr}). In this context a closed subgroup $\LL$ of $\KK$ was usually taken to be a compact quantum group such that there exists a surjective map $\pi\colon\C(\KK)\to\C(\LL)$ intertwining the respective comultiplications. Here $\C(\KK)$ and $\C(\LL)$ could be \emph{exotic} completions of $\Pol(\KK)$ and $\Pol(\LL)$ in the sense of \cite{dkps}.

However, the map $\pi$ can be lifted to $\pi^\uu\colon\C^\uu(\KK)\to\C^\uu(\LL)$. Indeed, as $\pi$ maps matrix elements of unitary representations to matrix elements of unitary representations, we have $\pi\bigl(\Pol(\KK)\bigr)\subset\Pol(\LL)$, so $\pi$ restricted to $\Pol(\KK)$ can be considered as a map into $\Pol(\LL)\subset\C^\uu(\LL)$. Thus the universal property of $\C^\uu(\KK)$ gives the $*$-homomorphism $\pi^\uu$. Now it is easy to check that the composition of $\pi^\uu$ with the \emph{reducing morphism} $\C^\uu(\LL)\to\C_\textrm{\tiny{r}}(\LL)$ is surjective, so by \cite[Theorem 3.6]{DKSS} $\pi^\uu$ is surjective. It follows that $\LL$ is a closed subgroup of $\KK$ in the sense of Woronowicz and, by \cite[Theorem 6.1]{DKSS}, also in the sense of Vaes. In particular we have a normal unital $*$-homomorphism $\linf(\hh{\LL})\hookrightarrow\linf(\hh{\KK})$.
\end{remark}

\begin{theorem}
Let $\KK$ be a compact quantum group and $\LL$ be its closed subgroup. Then $\KK$ is normal in the sense of Wang if and only if it is normal.
\end{theorem}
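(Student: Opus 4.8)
The plan is to prove the equivalence of Wang's representation-theoretic normality and the coideal-theoretic normality (Definition \ref{DefNormSubgrp}) by translating both conditions into statements about the fixed-point structure of the action by inner automorphisms, using the explicit generators of $\Linf\bigl(\Inn(\QG)\bigr)$ from Theorem \ref{MGZG}. By Remark \ref{RemNorm}, normality of $\LL$ (in the sense of Vaes) is equivalent to the assertion that $\alL$ restricts to an action of $\Inn(\KK)$ on $\linf(\hh{\LL})$, i.e.\ that $\ww^*(x\tens\I)\ww\in\Linf(\hh{\KK})\vtens\linf(\hh{\LL})$ for every $x\in\linf(\hh{\LL})$. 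So the task is to show this coideal invariance holds if and only if the Wang multiplicity condition holds.

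First I would fix the decomposition $\linf(\hh{\KK})=\bigoplus_{\iota\in\Irr(\KK)}\M_{n_\iota}(\CC)$ and recall that $\ww=\bigoplus_\iota u^\iota$, so that on the block $\M_{n_\iota}$ the formula $\ww(e^\iota_{i,j}\tens\I)\ww^*=\sum_{a,b}e^\iota_{a,b}\tens u^\iota_{a,i}{u^\iota_{b,j}}^*$ (as computed in the proof of Theorem \ref{cptThm}) makes the inner-automorphism action completely explicit. The surjection $\rho$ defining $\LL$ sends $u^\iota_{i,j}$ to the matrix elements of the representation $(\id\tens\rho)u^\iota$ of $\LL$; the von Neumann algebra $\linf(\hh{\LL})$ sits inside $\linf(\hh{\KK})$ as the direct sum of the blocks indexed by $\Irr(\LL)$, matched up via the branching of $\Irr(\KK)$-representations into $\Irr(\LL)$-representations under $\rho$. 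The next step is to spell out what it means for the image $\alL(x)$ of a block element $x\in\M_{n_\iota}\subset\linf(\hh{\LL})$ to land back in $\Linf(\hh{\KK})\vtens\linf(\hh{\LL})$: applying functionals to the $\Inn(\KK)$-leg and tracking which matrix elements $u^\iota_{a,i}{u^\iota_{b,j}}^*$ appear, invariance becomes a condition on how the conjugation/fusion rules interact with the subalgebra $\linf(\hh{\LL})$.

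The conceptual bridge between the two sides is that the Wang condition — the trivial representation appears in $(\id\tens\rho)u$ with multiplicity either $0$ or $\dim u$ — says exactly that on each irreducible $\iota$ the restriction $(\id\tens\rho)u^\iota$ is either a multiple of the trivial representation of $\LL$ (the ``full'' case) or contains it not at all. I would argue that this dichotomy is precisely the condition guaranteeing that conjugation by $\ww$ preserves the block structure defining $\linf(\hh{\LL})$: when $u^\iota$ maps into the trivial isotypic component of $\LL$, the elements $u^\iota_{a,i}{u^\iota_{b,j}}^*$ are themselves $\LL$-invariant in the relevant leg, whereas a mixed multiplicity would produce cross terms forcing $\alL(x)$ out of $\Linf(\hh{\KK})\vtens\linf(\hh{\LL})$. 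Thus the forward and backward implications both reduce to comparing, block by block, the support of $\alL(x)$ against the blocks of $\linf(\hh{\LL})$.

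The main obstacle I expect is the backward direction — deducing the Wang multiplicity condition from coideal normality — because one must convert the analytic statement $\alL(\linf(\hh{\LL}))\subset\Linf(\hh{\KK})\vtens\linf(\hh{\LL})$ into the finite-dimensional combinatorial statement about multiplicities of the trivial representation. The cleanest route is to test normality against the matrix elements $y=u^\iota_{i,j}$ (viewed inside $\linf(\hh{\LL})$ via $\rho$) and slice the $\hh{\KK}$-leg of $\alL(y)$ by functionals dual to the matrix units; the resulting elements must remain in $\linf(\hh{\LL})$, and expanding $(\id\tens\rho)u^\iota$ into its $\LL$-irreducible constituents shows that the only way for the conjugates to avoid leaving $\linf(\hh{\LL})$ for all choices is the all-or-nothing multiplicity of the trivial representation. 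Care will be needed to handle the coamenability of $\hh{\KK}$ (so that $\ww=\Ww$ and the Hopf-algebraic and von Neumann pictures coincide) and to pass freely between the densely-defined $\Pol$-level computation and its weak closure, but these are the routine points rather than the heart of the argument.
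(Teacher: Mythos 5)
Your proposal has a genuine gap, and it starts with a confusion about where the various objects live. The elements you propose to test normality against, $y=u^\iota_{i,j}$ ``viewed inside $\linf(\hh{\LL})$ via $\rho$'', are matrix coefficients and hence elements of $\Pol(\KK)\subset\Linf(\KK)$ (with $\rho(u^\iota_{i,j})\in\Pol(\LL)\subset\Linf(\LL)$); they are not elements of the dual algebra $\linf(\hh{\LL})$, which is a direct sum of matrix blocks. Relatedly, the image $\gamma\bigl(\linf(\hh{\LL})\bigr)$ inside $\linf(\hh{\KK})=\bigoplus_\iota M_{n_\iota}(\CC)$ is not ``the direct sum of the blocks indexed by $\Irr(\LL)$'': the embedding is diagonal across the blocks of $\linf(\hh{\KK})$, the component of $\gamma(y)$ in $M_{n_\iota}(\CC)$ being the action of $y$ in the isotypic decomposition of $(\id\tens\rho)u^\iota$, so a single $y$ has components in (typically) all blocks and $\gamma\bigl(\linf(\hh{\LL})\bigr)$ need not contain the central projections of the blocks (for $\LL$ trivial it is $\CC\I$). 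Hence the ``block by block comparison of supports'' you propose is not available. Note also that the containment you write for normality, $\ww^*(x\tens\I)\ww\in\Linf(\hh{\KK})\vtens\linf(\hh{\LL})$, has the wrong legs: the second leg of $\ww^*(x\tens\I)\ww$ lies in $\Linf(\KK)$, and the correct condition is $\ww^*\bigl(\gamma(y)\tens\I\bigr)\ww\in\gamma\bigl(\linf(\hh{\LL})\bigr)\vtens\Linf(\KK)$.

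More importantly, the central step --- that the all-or-nothing multiplicity of the trivial representation is ``precisely the condition guaranteeing that conjugation by $\ww$ preserves'' $\gamma\bigl(\linf(\hh{\LL})\bigr)$ --- is asserted rather than proved; it is essentially a coordinate restatement of the theorem. The two conditions live on opposite sides of the duality: Wang's condition concerns the $\LL$-fixed vectors in each $u^\iota$, i.e.\ it records which coefficients $u^\iota_{i,j}$ lie in the coideal $\Pol(\KK/\LL)\subset\Linf(\KK)$, whereas invariance under $\alL$ concerns the subalgebra $\gamma\bigl(\linf(\hh{\LL})\bigr)$ of the dual. Any proof must supply a bridge between the two sides, and your sketch contains none. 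The paper's bridge consists of the co-duality $\dd{\linf(\hh{\LL})}=\Linf(\KK/\LL)$ together with $\Pol(\KK/\LL)=\Linf(\KK/\LL)\cap\Pol(\KK)$, Wang's own characterization (Wang-normality $\Leftrightarrow$ $\Pol(\KK/\LL)=\Pol(\LL\backslash\KK)$, i.e.\ $R$-invariance of $\Pol(\KK/\LL)$), and the equivalence of normality with strong normality from \cite{ext}. A direct computation with the adjoint action would in effect have to reprove both cited results, and the ``cross terms'' heuristic does not do that: for instance, passing from the multiplicity condition to invariance of $\gamma\bigl(\linf(\hh{\LL})\bigr)$ requires recovering $\linf(\hh{\LL})$ from the $\LL$-fixed vectors as the relative commutant of $\Linf(\KK/\LL)$, which is exactly the co-duality input your argument omits.
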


\begin{proof}
Let $\pi\colon\C^\uu(\KK)\to\C^\uu(\LL)$ be the epimorphism identifying $\LL$ as a closed subgroup of $\KK$ and define
\[
\begin{split}
\Pol(\KK/\LL)&=\bigl\{a\in\Pol(\KK)\st(\id\tens\pi)\Delta_\KK(a)=a\tens\I\bigr\},\\
\Pol(\LL\backslash\KK)&=\bigl\{a\in\Pol(\KK)\st(\pi\tens\id)\Delta_\KK(a)=\I\tens{a}\bigr\}.
\end{split}
\]
Using \cite[Eq.~(8.6)]{cqg} one can easily show that $R$ maps these subalgebras of $\Pol(\KK)$ bijectively onto one another.
Moreover $\Pol(\KK/\LL)$ is a strongly dense $*$-subalgebra of $\Linf(\KK/\LL)$ which by definition is the co-dual of the coideal $\linf(\hh{\LL})\subset\linf(\hh{\KK})$ and in fact
\begin{equation}\label{cap}
\Pol(\KK/\LL)=\Linf(\KK/\LL)\cap\Pol(\KK)
\end{equation}
(cf.~\cite[Proposition 3.4]{ext}).

By \cite[Proposition 3.2]{wang3} $\LL$ is normal in the sense of Wang if and only if $\Pol(\KK/\LL)=\Pol(\LL\backslash\KK)$ or, in other words, if and only if $\Pol(\KK/\LL)$ is $R$-invariant. This means that $\linf(\hh{\LL})\subset\linf(\hh{\KK})$ is strongly normal (\cite[Definition 4.5]{ext}) which by \cite[Theorem 4.6]{ext} is equivalent to its normality, i.e.~to $\LL$ being normal in the sense of Definition \ref{DefNormSubgrp}.

Thus normality in the sense of Wang implies normality as defined in \cite{ext} (see the introductory section). Conversely, if $\LL$ is normal then $\linf(\hh{\LL})\subset\linf(\hh{\KK})$ is strongly normal, and so $\Linf(\KK/\LL)$ is $R$-invariant. This means that any $a\in\Pol(\KK/\LL)$ satisfies $R(a)\in\Linf(\KK/\LL)$. But the set of all $R(a)$ with $a\in\Pol(\KK/\LL)$ is equal to $\Pol(\LL\backslash\KK)$, so $\Pol(\LL\backslash\KK)\subset\Linf(\KK/\LL)$ which by \eqref{cap} means that $\Pol(\LL\backslash\KK)\subset\Pol(\KK/\LL)$ and since $R^2=\id$ we have $\Pol(\LL\backslash\KK)=\Pol(\KK/\LL)$. The latter is equivalent to Wang-normality of $\LL$ again by \cite[Proposition 3.2]{wang3}.
\end{proof}

Note that in \cite{Patri} it is shown that a certain class of \emph{classical} automorphisms related to inner automorphisms of a compact quantum group preserves every quantum subgroup which is normal in the sense of Wang. In view of the result above and the fact that it is still not clear how to define classical inner automorphisms of a (locally) compact quantum group, so that the definition coincides with the standard one in the classical case, it is natural to ask the following question.

\begin{q}
Every `classical point' of $\Inn(\QG)$ (i.e.~every character of the algebra $\C_0\bigl(\Inn(\QG)\bigr)$) determines an automorphism of a $\cst$-algebra $\C_0(\hh{\QG})$, at least when $\QG$ is compact. How can one characterize the automorphisms of $\C_0(\hh{\QG})$ which arise in this way?
\end{q}

\section{Examples}\label{examples}

In the final short section we discuss centers for cocycle-twists and duals of Drinfeld-Jimbo deformations.

\subsection{Center of a cocycle-twist}

Let $\QG$ be a quantum group and let $\Omega\in\Linf(\hh{\QG})\vtens\Linf(\hh{\QG})$ be a unitary $2$-cocycle, i.e.~a unitary element such that
\[
\Omega_{23}(\id\tens\Delta_{\hh{\QG}})(\Omega)=\Omega_{12}(\Delta_{\hh{\QG}}\tens\id)(\Omega).
\]
Using $\Omega$ we define a map $\Gamma\colon\Linf(\hh{\QG})\to\Linf(\hh{\QG})\vtens\Linf(\hh{\QG})$ via
\[
\Gamma(x)=\Omega\Delta_{\hh{\QG}}(x)\Omega^*,\qquad{x}\in\Linf(\hh{\QG}).
\]
It is easy to see that $\Gamma$ is coassociative and, by results of De Commer (\cite[Section 6]{DeCommer}), $\Linf(\hh{\QG})$ with $\Gamma$ as comultiplication carries the structure of an algebra of functions on a locally compact quantum group which we will denote $\hh{\QG^\Omega}$. Thus in our notation $\Gamma$ becomes $\Delta_{\hh{\QG^\Omega}}$. The dual of $\hh{\QG^\Omega}$ which we will denote by the symbol $\QG^\Omega$ is called the \emph{cocycle twist} of $\QG$ by $\Omega$.

Let us note that the centers of $\QG$ and $\QG^\Omega$ are the same. Indeed, by definition we have $\Linf(\hh{\QG^\Omega})=\Linf(\hh{\QG})$. Moreover a central subalgebra in this von Neumann algebra is a Baaj-Vaes subalgebra for $\Delta_{\hh{\QG}}$ if and only if it is a Baaj-Vaes subalgebra for $\Delta_{\hh{\QG^\Omega}}$. To see this assume that $\sA$ is a central Baaj-Vaes subalgebra of $\Linf(\hh{\QG})$ for $\Delta_{\hh{\QG}}$. Then $\Delta_{\hh{\QG}}(\sA)\subset\sA\vtens\sA\subset\cZ\bigl(\Linf(\hh{\QG})\bigr)\vtens\cZ\bigl(\Linf(\hh{\QG})\bigr)$ and we get
\[
\bigl.\Delta_{\hh{\QG}}\bigr|_{\sA}=\bigl.\Delta_{\hh{\QG^\Omega}}\bigr|_{\sA}.
\]
Since $\sA$ with $\bigl.\Delta_{\hh{\QG}}\bigr|_{\sA}$ defines a locally compact quantum group $\QH$, using \cite[Proposition 10.5]{BV} and \cite[Proposition 5.45]{KV}, we find that the unitary antipode and scaling group of $\hh{\QG}$ restricted to $\sA$ coincide with the corresponding maps for $\QH$. In particular $\sA$ is a Baaj-Vaes subalgebra in $\Linf(\hh{\QG^\Omega})$ for $\Delta_{\hh{\QG^\Omega}}$.

The equality of centers of $\QG$ and $\QG^\Omega$ follows now from Definition \ref{DefZG}.

\subsection{Duals of Drinfeld-Jimbo deformations}\label{DJ}

We will now consider Drinfeld-Jimbo deformations of a semisimple simply connected compact Lie group (\cite{Drinfeld}). Let $G$ be such a group. As described e.g.~in \cite[Section 2.4]{NeTu} to any value of the deformation parameter $q\in\,]0,1[$ a compact quantum group $\QG_q$ is associated (see also \cite{JosephBook}). We will characterize the center (and consequently also the quantum group of inner automorphisms) of the dual discrete quantum group $\hh{\QG_q}$. In this section we will assume that the deformation parameter $q$ is transcendental. This is needed in order to use results from \cite{JosephBook}.

\begin{theorem}\label{DualGq}
For a Drinfeld-Jimbo quantization $\QG_q$ we have $\cZ(\hh{\QG_q})=\{e\}$. Consequently $\Inn(\hh{\QG_q})=\hh{\QG_q}$.
\end{theorem}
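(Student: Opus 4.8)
The plan is to unwind the definition of the center and reduce the statement to a representation-theoretic triviality. Recall that $\hh{\hh{\QG_q}}=\QG_q$, so by Definition \ref{DefZG} and Theorem \ref{LhatGthm} the equality $\cZ(\hh{\QG_q})=\{e\}$ is equivalent to the assertion that the largest central Baaj-Vaes subalgebra of $\Linf(\hh{\hh{\QG_q}})=\Linf(\QG_q)$ reduces to $\CC\I$. Thus I would fix such a subalgebra $\sA\subset\Linf(\QG_q)$ -- central in the von Neumann sense and invariant under $\Delta_{\QG_q}$, $R$ and $\tau$ -- and aim to show $\sA=\CC\I$.

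First I would pin down the structure of $\sA$. Being contained in the center of $\Linf(\QG_q)$, the algebra $\sA$ is commutative, and $(\sA,\bigl.\Delta_{\QG_q}\bigr|_{\sA})=\Linf(\hh{\cZ(\hh{\QG_q})})$ is the algebra of functions on a closed subgroup of the discrete quantum group $\hh{\QG_q}$. Such a subgroup is itself discrete, and by Remark \ref{remZG} it is abelian; hence $\hh{\cZ(\hh{\QG_q})}$ is a \emph{classical compact abelian group} $C$ and $\sA\cong\Linf(C)$. Since $\Linf(C)$ is generated as a von Neumann algebra by the characters of $C$, and characters are grouplike for $\Delta_C=\bigl.\Delta_{\QG_q}\bigr|_{\sA}$, I obtain that $\sA$ is generated by a family of \emph{central grouplike unitaries} $g\in\Linf(\QG_q)$ (those satisfying $\Delta_{\QG_q}(g)=g\tens g$), i.e.~by one-dimensional representations of $\QG_q$; in particular these lie in $\Pol(\QG_q)$.

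The heart of the argument is then to show that $\QG_q$ admits no nontrivial grouplike element. Here I would invoke the representation theory of the Drinfeld-Jimbo deformation: for $q$ transcendental the results collected in \cite{JosephBook} identify the representation category of $\QG_q$ with that of $G$, so that $\Irr(\QG_q)$ is labelled by dominant integral weights with the classical dimensions. A grouplike element is a one-dimensional irreducible representation, hence corresponds to a weight $\lambda$ with $\dim V_\lambda=1$; by the Weyl dimension formula this forces $\lambda=0$, because $G$ is semisimple and therefore has no nontrivial characters. Consequently the only grouplike unitary in $\Linf(\QG_q)$ is $\I$, so $\sA=\CC\I$ and $\cZ(\hh{\QG_q})=\{e\}$. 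Feeding this into the exact sequence \eqref{exactSequence}, or directly into the identity $\Inn(\hh{\QG_q})=\hh{\QG_q}/\!\cZ(\hh{\QG_q})$ established after Definition \ref{DefInnG}, yields $\Inn(\hh{\QG_q})=\hh{\QG_q}$.

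I expect the main obstacle to be the third step, the genuinely quantum input that $\QG_q$ has no nontrivial grouplikes: unlike in the classical case, one cannot simply cite perfectness of $G$, and one must instead rely on the classification and structure results for $U_q(\mathfrak g)$ and $\mathcal{O}_q(G)$ valid for transcendental $q$ -- this is precisely where the standing assumption on $q$ enters. A secondary point requiring care is the reduction in the second step, namely the identification of $\sA$ with $\Linf(C)$ for a classical abelian $C$ and the resulting description of $\sA$ as generated by grouplikes; this rests on Remark \ref{remZG} and on the fact that closed subgroups of a discrete quantum group are again discrete.
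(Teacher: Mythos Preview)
Your argument has a genuine gap in the second step. From Remark~\ref{remZG} you correctly deduce that $\cZ(\hh{\QG_q})$ is \emph{abelian} in the sense used in this paper, i.e.\ cocommutative; this tells you that $C=\hh{\cZ(\hh{\QG_q})}$ is a \emph{classical} compact group (its function algebra $\sA$ is commutative), but it does \emph{not} tell you that $C$ is abelian as a group. Abelianness of $C$ is equivalent to $\cZ(\hh{\QG_q})$ being \emph{classical}, an entirely different condition from being cocommutative. For a concrete illustration: if $K$ is any classical non-abelian compact group then $\hh{K}$ is cocommutative, so by Remark~\ref{remZG} one has $\cZ(\hh{K})=\hh{K}$, which is cocommutative but not classical, and the corresponding $C$ equals $K$, which is not abelian. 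Without $C$ abelian, $\Linf(C)$ is not generated by grouplikes, and your reduction to one-dimensional representations of $\QG_q$ collapses; the representation-theoretic fact you invoke --- that $\QG_q$ has no nontrivial one-dimensional representations --- is then too weak to finish.

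The paper avoids this by appealing to a stronger input, namely Joseph's result that the center of the Hopf $*$-algebra $\Pol(\QG_q)$ is trivial, $\cZ\bigl(\Pol(\QG_q)\bigr)=\CC\I$ (this is where the transcendence hypothesis on $q$ is used). Working with the left coideal $\hh{\sL}(\hh{\QG_q})=\{x:\Delta_{\QG_q}(x)\in\Linf(\QG_q)\vtens\cZ(\Linf(\QG_q))\}$ of Theorem~\ref{LhatGthm}, one decomposes it into irreducible $\Pol(\QG_q)$-subcomodules $\mathcal{V}_i\subset\Pol(\QG_q)$ and observes that for $v\in\mathcal{V}_i$ the second leg of $\Delta_{\QG_q}(v)$ lies in $\cZ(\Linf(\QG_q))\cap\Pol(\QG_q)\subset\cZ\bigl(\Pol(\QG_q)\bigr)=\CC\I$; the counit then forces $v\in\CC\I$. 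No abelianness of $C$ is needed. Your route could be repaired with the same input --- once one knows $\Pol(C)\subset\cZ\bigl(\Pol(\QG_q)\bigr)=\CC\I$ the conclusion is immediate --- but then the detour through grouplikes becomes superfluous.
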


\begin{proof}
Recall from Theorem \ref{LhatGthm} that $\Linf\bigl(\hh{\cZ(\QG)}\bigr)$ coincides with
\[
\hh{\sL}(\hh{\QG_q})={\bigl\{x\in\Linf(\QG_q)\st}\Delta_{\QG_q}(x)\in\Linf(\QG_q)\vtens\cZ(\Linf(\QG_q))\bigr\}.
\]
Moreover $\hh{\sL}(\hh{\QG_q})$ is a left coideal, so we can consider it as a von Neumann algebra with an action of $\QG_q$. Since the von Neumann algebraic version of Podle\'s condition holds automatically (\cite[Corollary 2.9]{proj}) we may decompose $\hh{\sL}(\hh{\QG_q})$ into isotypical components. More precisely, there exists a family $\{\mathcal{V}_i\}_{i\in\mathcal{I}}$ of irreducible $\Pol(\QG_q)$-comodules such that $\mathcal{V}_i\subset\Pol(\QG_q)\subset\hh{\sL}(\hh{\QG_q})$ for each ${i\in\mathcal{I}}$ and as a $\Pol(\QG_q)$-comodule
\[
\hh{\sL}(\hh{\QG_q})=\overline{\bigoplus_i\mathcal{V}_i}^{\textrm{weak}}.
\]

Now \cite[Theorem 9.3.20]{JosephBook} shows that $\cZ\bigl(\Pol(\QG_q)\bigr)=\CC\I$. The facts stated above imply then that for each ${i\in\mathcal{I}}$ we have $\mathcal{V}_i=\CC\I$. The ergodicity of the action $\QG_q$ on $\hh{\sL}(\hh{\QG_q})$ implies now that $\hh{\sL}(\hh{\QG_q})=\CC\I$ and the rest of the theorem follows.
\end{proof}

\begin{remark}
The proof of Theorem \ref{DualGq} will work equally well for any compact quantum group such that $\Pol(\QG)$ has trivial center. Hence it can be restated in the following more general form.
\end{remark}

\begin{theorem}
Let $\QG$ be a compact quantum group with $\cZ\bigl(\Pol(\QG)\bigr)=\CC\I$. Then $\cZ(\hh{\QG})=\{e\}$ and $\Inn(\hh{\QG})=\hh{\QG}$.
\end{theorem}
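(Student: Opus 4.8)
The plan is to repeat the argument of Theorem~\ref{DualGq} essentially verbatim, the only change being that the identity $\cZ\bigl(\Pol(\QG)\bigr)=\CC\I$, which there came from \cite[Theorem 9.3.20]{JosephBook}, is now simply the hypothesis. First I would apply Theorem~\ref{LhatGthm} with $\QG$ replaced by $\hh{\QG}$; since $\hh{\hh{\QG}}=\QG$, this identifies $\Linf\bigl(\hh{\cZ(\hh{\QG})}\bigr)$ with the left coideal
\[
\hh{\sL}(\hh{\QG})=\bigl\{x\in\Linf(\QG)\st\Delta_\QG(x)\in\Linf(\QG)\vtens\cZ(\Linf(\QG))\bigr\}\subset\Linf(\QG),
\]
so the whole statement reduces to proving $\hh{\sL}(\hh{\QG})=\CC\I$.

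Since $\hh{\sL}(\hh{\QG})$ is a left coideal, $\bigl.\Delta_\QG\bigr|_{\hh{\sL}(\hh{\QG})}$ makes it into a von Neumann algebra carrying an action of the compact quantum group $\QG$. The von Neumann algebraic Podle\'s condition then holds automatically by \cite[Corollary 2.9]{proj}, so I may decompose $\hh{\sL}(\hh{\QG})$ into isotypical components: there is a family $\{\mathcal{V}_i\}_{i}$ of finite-dimensional irreducible $\Pol(\QG)$-comodules with $\mathcal{V}_i\subset\Pol(\QG)\cap\hh{\sL}(\hh{\QG})$ and $\hh{\sL}(\hh{\QG})=\overline{\bigoplus_i\mathcal{V}_i}^{\textrm{weak}}$. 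Moreover this action is ergodic, since its fixed points sit inside the set of $x\in\Linf(\QG)$ with $\Delta_\QG(x)=\I\tens{x}$, which equals $\CC\I$ by invariance of the Haar state.

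The key step is to place each $\mathcal{V}_i$ in the center. As shown inside the proof of Theorem~\ref{LhatGthm} (applied to $\hh{\QG}$), every element of $\hh{\sL}(\hh{\QG})$ already lies in $\cZ(\Linf(\QG))$; hence $\mathcal{V}_i\subset\Pol(\QG)\cap\cZ(\Linf(\QG))=\cZ\bigl(\Pol(\QG)\bigr)$, the last equality holding because $\Pol(\QG)$ is weakly dense in $\Linf(\QG)$ and multiplication is separately weakly continuous. The hypothesis $\cZ\bigl(\Pol(\QG)\bigr)=\CC\I$ now forces $\mathcal{V}_i=\CC\I$ for every $i$, and ergodicity collapses the whole isotypical decomposition to the trivial component, giving $\hh{\sL}(\hh{\QG})=\CC\I$. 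This means $\cZ(\hh{\QG})=\{e\}$, and consequently $\Inn(\hh{\QG})=\hh{\QG}/\!\cZ(\hh{\QG})=\hh{\QG}$.

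The only genuinely delicate point is this center-location step: one must be sure that the spectral subspaces really consist of polynomial elements lying in the von Neumann algebraic center, and that intersecting with $\Pol(\QG)$ recovers precisely $\cZ\bigl(\Pol(\QG)\bigr)$. Everything else is a direct transcription of the Drinfeld--Jimbo argument.
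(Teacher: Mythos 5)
Your proof is correct and takes exactly the route the paper intends: the paper's own justification of this theorem is simply that the argument of Theorem~\ref{DualGq} goes through verbatim once $\cZ\bigl(\Pol(\QG)\bigr)=\CC\I$ is assumed as a hypothesis rather than imported from Joseph's book, and your transcription (reduction to $\hh{\sL}(\hh{\QG})=\CC\I$ via Theorem~\ref{LhatGthm}, isotypical decomposition via the Podle\'s condition, and ergodicity) matches it step for step. The one place you add detail is the center-location step $\mathcal{V}_i\subset\Pol(\QG)\cap\cZ\bigl(\Linf(\QG)\bigr)\subset\cZ\bigl(\Pol(\QG)\bigr)$, which is indeed the correct reading of the paper's terser formulation and closes the only point left implicit there.
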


\subsection*{Acknowledgements}
We thank Uli Kr\"ahmer for providing the key reference for the proof of Theorem \ref{DualGq} and Alexandru Chirvasitu, Sergey Neshveyev and Kenny De Commer for useful remarks on the first version of this paper. The authors would also like to express their gratitude to the referee for suggesting several insightful extensions and generalizations of the original results. The second author was partially supported by National Science Center (NCN) grant no.~2014/14/E/ST1/00525, the first and third authors were supported by National Science Center (NCN) grant no.~2015/17/B/ST1/00085.


\begin{thebibliography}{[1]}
\bibitem{AD1}
N.~Andruskiewitsch: Notes on extensions of Hopf algebras, \emph{Can.~J.~Math.} \textbf{48} (1996), 3--42.
\bibitem{BV}
S.~Baaj \& S.~Vaes: Double crossed products of locally compact quantum groups, \emph{J.~Inst.~Math.~Jussieu} \textbf{4} (2005), 135--173.
\bibitem{BanVer}
{T.~Banica \& R.~Vergnioux:} Invariants of the half-liberated orthogonal group, \emph{Ann.~Inst.~Fourier} \textbf{60} (2010), 2137--2164.
\bibitem{bny}
{ J.~Bichon, S.~Neshveyev \& M.~Yamashita:} Graded twisting of categories and quantum groups by group actions, Preprint \texttt{arXiv:1506.09194v2}.
\bibitem{Chirva}
A.~Chirvasitu: Centers, cocenters and simple quantum groups, \emph{Journal of Pure and Applied Algebra} \textbf{218} (2014), 1418--1430.
\bibitem{CHK}
A.~Chirvasitu \& P.~Kasprzak: On the Hopf (co)center of a Hopf algebra. To appear in \emph{Journal of Algebra}. Available as \texttt{arXiv:1512.03749v4}
\bibitem{DKSS}
M.~Daws, P.~Kasprzak, A.~Skalski \& P.M.~So{\l}tan: Closed quantum subgroups of locally compact quantum groups, \emph{Adv.~Math.} \textbf{231} (2012), 3473--3501.
\bibitem{DeCommer}
K.~De Commer: Galois objects and cocycle twisting for locally compact quantum groups, \emph{J.~Op.~Theory} \textbf{66} (2011), 59--106.
\bibitem{Drinfeld}
V. G. Drinfeld: Quantum groups. In \emph{Proceedings of the International Congress of Mathematicians} Berkeley, California, USA, 1986, pp.~798--820.
\bibitem{Enock}
M. Enock: Sous-facteurs interm\'ediaries et groupes quantiques mesur\'es, \emph{ J.~Op.~Theory} \textbf{42} (1999), 305–-330.
\bibitem{FimaF}
P.~Fima: On locally compact quantum groups whose algebras are factors, \emph{J.~Funct.~Anal.} \textbf{244} (2007), 78--94.
\bibitem{JosephBook}
A.~Joseph: \emph{Quantum groups and their primitive ideals}, Springer, 1995.
\bibitem{KaspSol}
P.~Kasprzak \& P.M.~So{\l}tan: Embeddable quantum homogeneous spaces, \emph{J.~Math.~Anal.~Appl.} \textbf{411} (2014), 574-591.
\bibitem{proj}
P.~Kasprzak \& P.M.~So{\l}tan: Quantum groups with projection on von Neumann algebra level, \emph{J.~Math.~Anal. Appl.} \textbf{427} (2015), 289--306.
\bibitem{ext}
P.~Kasprzak \& P.M.~So{\l}tan: Quantum groups with projection and extensions of locally compact quantum groups, Preprint \texttt{arXiv:1412.0821 [math.OA]}.
\bibitem{KV}
J.~Kustermans \& S.~Vaes: Locally compact quantum groups, \emph{Ann.~Scient.~\'{E}c.~Norm.~Sup.} $4^{\text{\tiny e}}$ s\'{e}rie, t.~\textbf{33} (2000), 837--934.
\bibitem{dkps}
D.~Kyed \& P.M.~So{\l}tan: Property $(\mathrm{T})$ and exotic quantum group norms, \emph{J.~Noncommut.~Geom.} \textbf{6} (2012), 773--800.
\bibitem{mnw}
T.~Masuda, Y.~Nakagami \& S.L.~Woronowicz: A $\mathrm{C}^*$-algebraic framework for the quantum groups, \emph{Int.~J. Math.} \textbf{14} (2003), 903--1001.
\bibitem{MRW}
R.~Meyer, S.~Roy \& S.~L.~Woronowicz: Homomorphisms of quantum groups, \emph{M\"unster J.~Math} \textbf{5} (2012), 1--24.
 \bibitem{Mug}
M.~M\"uger: On the center of a compact group, \emph{Int.~Math.~Res.~Not.}, \textbf{51} (2004), 2751--2756.
\bibitem{NeTu}
S.~Neshveyev \& L.~Tuset: \emph{Compact quantum groups and their representation categories}, Soci\'et\'e Math\'ematique de France 2013.
\bibitem{Patri}
I.~Patri: Normal subgroups, center and inner automorphisms of compact quantum groups, \emph{Internat.~J.~Math} \textbf{24} (2013), 1350071 (27 pages)
\bibitem{PodPhd}
P.~Podle\'s: \emph{Przestrzenie kwantowe i ich grupy symetrii} (\emph{Quantum spaces and their symmetry groups}). Ph.D.~Thesis, Department of Mathematical Methods in Physics, Faculty of Physics, University of Warsaw (1989) (in Polish).
\bibitem{spheres}
P.~Podle\'s: Quantum spheres, \emph{Lett.~Math.~Phys.} \textbf{14} (1987), 193--202.
\bibitem{podles}
P.~Podle\'s: Symmetries of quantum spaces. Subgroups and quotient spaces of quantum $\mathrm{SU}(2)$ and $\mathrm{SO}(3)$ groups, \emph{Commun.~Math.~Phys.} \textbf{170} (1995), 1--20.
\bibitem{PodSLW}
P.~Podle\'s \& S.L.~Woronowicz: Quantum deformation of Lorentz group, \emph{Commun.~Math.~Phys.} \textbf{130} (1991), 381--431.
\bibitem{Salmi}
P.~Salmi: Compact quantum subgroups and left invariant \cst-subalgebras of locally compact quantum groups, \emph{J.\,Funct.\,Anal.} \textbf{261} (2011), 1--24.
\bibitem{nazb}
P.M.~So\l{}tan: New quantum ``$az+b$'' groups, \emph{Rev.~Math.~Phys.} \textbf{17} (2005), 313--364.
\bibitem{tomatsu}
R.~Tomatsu: A characterization of right coideals of quotient type and its application to classification of Poisson boundaries, \emph{Commun.~Math.~Phys.} \textbf{275} (2007), 271--296.
\bibitem{VaesVainerman}
L.~Vainerman \& S.~Vaes: Extensions of locally compact quantum groups and the bicrossed product construction, \emph{Adv.~Math.} \textbf{175} (2003), 1--101.
\bibitem{VaesVainerman2}
L.~Vainerman \& S.~Vaes: On low-dimensional locally compact quantum groups. In \emph{Locally Compact Quantum Groups and Groupoids}. Proceedings of the Meeting of Theoretical Physicists and Mathematicians, Strasbourg, February 21 - 23, 2002., Ed.~L.~Vainerman, IRMA Lectures on Mathematics and Mathematical Physics, Walter de Gruyter, Berlin, New York (2003), pp. 127--187.
\bibitem{free}
S.~Wang: Free products of compact quantum groups, \emph{Comm.~Math.~Phys.} \textbf{167} (1995), 671--692.
\bibitem{wangsimple}
S.~Wang: Simple compact quantum groups. I, \emph{J.~Funct.~Anal.} \textbf{256} (2009), 3313--3341.
\bibitem{wang3}
S.~Wang: Equivalent notions of normal quantum subgroups, compact quantum groups with properties $F$ and $F\!\!\:D$, and other applications, \emph{J.~Algebra} \textbf{397} (2014), 515--534.
\bibitem{su2}
S.L.~Woronowicz: Twisted $\mathrm{SU}(2)$ group. An example of a non-commutative differential calculus, \emph{Publ.~RIMS, Kyoto Univ.} \textbf{23} (1987), 117--181.
\bibitem{pseudogr}
S.L.~Woronowicz: Compact matrix pseudogroups, \emph{Comm.~Math.~Phys.} \textbf{111} (1987), 613--665.
\bibitem{cqg}
S.L.~Woronowicz: Compact quantum groups. In: \emph{Sym\'etries quantiques, les Houches, Session LXIV 1995,} Elsevier 1998, pp.~845--884.
\bibitem{azb}
S.L.~Woronowicz: Quantum `$az+b$' group on complex plane, \emph{Int.~J.~Math.} \textbf{12}, (2001), 461--503.
\bibitem{axb}
S.L.~Woronowicz \& S.~Zakrzewski: Quantum ``$ax+b$'' group, \emph{Rev.~Math.~Phys.} \textbf{14} (2002), 797--828.
\end{thebibliography}
\end{document}